\newtheorem{theorem}{Theorem}[section]
\newtheorem{lemma}[theorem]{Lemma}
\newtheorem{proposition}[theorem]{Proposition}
\newtheorem*{gen-dif}{\fbox{{\large A}} \hypertarget{Agen-dif}{Gen-Dif}}
\newtheorem*{min-balln}{\fbox{{\large A}} \hypertarget{Amin-ball}{Cballs}}
\theoremstyle{definition}
\newtheorem{definition}[theorem]{Definition}
\newtheorem{example}[theorem]{Example}
\newtheorem{situation}[theorem]{Situation}
\renewcommand{\phi}{\varphi}
 \title{Closed bounded sets in 1-h-minimal valued fields}
 \author{Juan Pablo Acosta L\'opez}
\date{\today}
\begin{document}

\maketitle
\begin{abstract}
We show that the 1-h-minimal fields satisfy a property of naive compactness
for decreasing definable 
families of closed bounded sets indexed by the value group.
We use this to prove that a local topological definable group has a definable
family of neighborhoods of the identity consisting of open subgroups.
\end{abstract}
\section{Introduction}
In this paper we show that a 1-h-minimal field with algebraic
$RV$, satisfies a naive property of compactness, 
where every decreasing definable 
family
of closed bounded sets in $K^n$ 
has nonempty intersection provided the family is indexed
by the value group $\Gamma$. This is Proposition \ref{main-compactness}.
Algebraic $RV$ in residue characteristic $0$ or 
$RV_{p,\bullet}$ in mixed characteristic, means that the 
definable subsets of $RV$, respectively $RV_{p,\bullet}$,
are the same as those of the language of pure valued fields.
In the language of pure valued fields there is a quantifier elimination
relative to these sorts so we can describe these sets in terms of a
suitable language for them. An example of a language with this
property is the extension of the algebraic language by restricted
analytic functions. Every 1-h-minimal field remains 1-h-minimal when
the language in $RV$ is expanded, so in some sense the property of 
1-h-minimality is a property relative to $RV$. For arbitrary
$RV$ this property of compactness is not true.

This property does not distinguish between, say, open balls and closed
balls in an algebraically closed valued field, so it is different from
notions of properness in rigid geometry.

We give as an application of this result that every definable local topological
group in a 1-h-minimal valued field with algebraic $RV$ has a fundamental
neighborhood of open subgroups. This is analogous to the fact that a 
$p$-adic local Lie group has such a fundamental neghborhood too.

The proof of the main result in residue characteristic $0$ 
uses the fact that a decreasing family of bounded sets in an
ordered linear group, (the value group), is eventually constant. 
This property of ordered linear groups is proven in Section \ref{secordgro}.
This
follows from a quantifier elimination result of ordered linear
groups relative to a certain family of colored linear orders, the spine
of the group. This was done in \cite{CluHal}. For the theory of colored linear
orders we give a cell decomposition like result to show what is needed.
Just like in other contexts, like the real closed fields or the p-adically closed fields, 
this cell decomposition actually implies previously known
quantifier elimination results for colored linear orders, see for instance
\cite{SimDPMinOrd}. We mention that this method of proof has the advantage
over the more usual back-and-forth that the resulting quantifier elimination
is generally constructive, in the sense that one gets a primitive
recursive map from formulas to equivalent quantifier free formulas. 

In Section \ref{seccom}, we prove the compactness property
in residue characteristic $0$. We work in a slightly more general framework
designed to be used in the proof in mixed characteristic.

In Section \ref{seccommix}, we prove the compactness property
in mixed residue characteristic. For this we work in a residue
characteristic $0$ coarsening of the given valuation. Unfortunately
the resulting $RV$ is not algebraic, 
hence the need for the axiomatic
framework in Section \ref{seccom}.

In Section \ref{seclocgro} we give our application to local definable
topological groups. The arguments follow topological lines using
the compactness property.

In the Appendix we present the needed background on ind-definable
sets for the construction appearing in the cell decomposition for 
colored linear orders. The terminology is unusually categorical, 
which we hope is clarifying for readers familiar with categories.
\section{Notation}
Here we will use basic notions of model theory without remark.
The models will generally be taken to be $\omega$-saturated.

A valued field $K$ with valuation $v$ and value group $\Gamma$
will be denoted additively $v:K\to \Gamma\cup\{\infty\}$.
We denote $\mathcal{O}$ the valuation ring of $K$ and 
$\mathcal{M}$ the maximal ideal of $\mathcal{O}$.
We denote $RV=K^{\times}/(1+\mathcal{M})$ and $rv:K^{\times}\to RV$ the 
canonical projection. 

We will use the notion of a $n$-h-miminal valued field, most often when
$n=1$, defined
in residue characteristic $0$ in \cite{hensel-min} and in mixed characteristic 
$(0,p)$ in \cite{hensel-minII}.

\section{Ordered abelian groups}\label{secordgro}
The main result of this section is Proposition 
\ref{closed-bounded-ordered-group}, which is needed for the next 
section.

In ordered linear groups there is a quantifier elimination relative to 
a collection of interpretable colored linear orders (linear orders with
a finite number of predicates).

We start by giving a quantifier elimination for colored linear orders which
is somewhat more detailed than in \cite{SimDPMinOrd} Proposition 4.1.

Suppose we have a linear order $S$ with a set of predicates $\mathcal{U}$.

Given an element $a\in S$ and a predicate $U$, we can define 
$U_-(a)=\text{sup}\{x\in U: x<a\}$ and 
$U_{+}(a)=\text{inf}\{x\in U:x>a\}$.
These two elements might not necessarily exist in $S$, but they
give definable Dedekind cuts in $S$. 
So we get $U_-,U_+:S\to \bar{S}$, where $\bar{S}$ is the set of definable
cuts of $S$, together with a maximal or minimal element if $S$ does not
have them.
See the Appendix \ref{ind-definable}
for the precise definition of $\bar{S}$.
Note that the formulas for $U_-$ and $U_+$ also make sense when $a\in \bar{S}$,
so we can form the compositions. 
More precisely $\bar{S}$ is a strict ind-definable set
(actually because the language does not have two distinct constants
it is a colimit of finite disjoint unions of interpretable sets), 
and $U_{+},U_{-}:\bar{S}\to \bar{S}$ becomes an ind-definable map. 
See Example \ref{completion-ind-definable} for the definition of the 
ind-definable structure in $\bar{S}$ and
 Example \ref{completion-function-definable} for the verification that 
$U_+$ and $U_-$ are ind-definable maps.

We denote $F(\mathcal{U})$ the set of functions 
$F:\bar{S}\to \bar{S}$
which are a finite composition of functions of the form $U_+$ and $U_-$ for 
$U\in\mathcal{U}$, the functions $U_+$ and $U_-$ are the basic functions in
$F(\mathcal{U})$.

\begin{lemma}\label{u-calculus}
\begin{enumerate}
\item
The functions in $F(\mathcal{U})$ are non-decreasing, and satisfy
$U_-(a)\leq a\leq U_+(a)$.
Also $U_-U_+(a)\leq a\leq U_+U_-(a)$.
\item
For $a$ and $b$ in $\bar{S}$ we have that there is 
$x\in S$ with $x\in U$ and $a<x<b$ if and only if 
$U_{+}(a)<b$. 
\item $U_+(a)<b$ if and only if $a<U_{-}(b)$. 
\item $b\leq U_{+}(a)$ if and only if $U_-(b)\leq a$. 
\item
 $b=U_{+}(a)$ is equivalent to $U_{+}U_{-}(b)=b$ and 
$U_{-}(b)\leq a$.
\item $a=U_-(b)$ is equivalent to $U_-U_+(a)=a$ and $U_+(a)\geq b$.
\end{enumerate}
\end{lemma}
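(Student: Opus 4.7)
The plan is to recognise that $U_-$ and $U_+$ form a Galois adjunction on the complete lattice $\bar S$ (the content of (4)), and then deduce the remaining items from standard adjunction bookkeeping; I would prove the parts in order so each feeds the next.

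For (1), monotonicity of $U_-$ and $U_+$ follows from the inclusions $\{x\in U:x<a\}\subseteq\{x\in U:x<b\}$ and $\{x\in U:x>b\}\subseteq\{x\in U:x>a\}$ whenever $a\le b$, together with the order-preservation of $\sup$ and $\inf$; compositions inherit monotonicity. The bounds $U_-(a)\le a\le U_+(a)$ are immediate from the $\sup/\inf$ definitions. For $U_-U_+(a)\le a$, I would set $b=U_+(a)$ and observe that any $x\in U$ with $x<b$ must satisfy $x\le a$ (otherwise $x>a$ and $x<b=\inf\{y\in U:y>a\}$ would contradict the infimum), so $U_-(b)\le a$; the bound $a\le U_+U_-(a)$ is symmetric.

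Part (2) is essentially the defining property of the infimum: $\inf\{x\in U:x>a\}<b$ iff the set contains some element $<b$, i.e.\ iff $U\cap(a,b)\ne\emptyset$. Part (3) follows by pairing (2) with its symmetric analogue for $U_-$, both sides being equivalent to the same nonemptiness condition, and (4) is the contrapositive of (3) in the totally ordered $\bar S$. For the forward direction of (5), $b=U_+(a)$ gives $U_-(b)\le a$ immediately from (4), while $U_+U_-(b)=b$ follows from a two-sided squeeze: $b\le U_+U_-(b)$ is the pointwise bound of (1) at $b$, and $U_+U_-(b)=U_+U_-U_+(a)\le U_+(a)=b$ uses $U_-U_+(a)\le a$ together with monotonicity of $U_+$. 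Conversely, from the two hypotheses the adjunction (4) yields $b=U_+U_-(b)\le U_+(a)$; for the reverse inequality $U_+(a)\le b$, unpacking $U_+U_-(b)=b$ forces the set $\{y\in U:y>U_-(b)\}$ to have $b$ as its infimum, which produces, for each $b'>b$, an element $x\in U$ with $a<x<b'$ (using $U_-(b)\le a$ together with the fact that no $U$-element lies in the interval $(U_-(b),b)$), whence $U_+(a)\le x<b'$, and letting $b'\downarrow b$ gives $U_+(a)\le b$. Part (6) follows by the dual argument.

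I expect the main obstacle to be the converse direction of (5) and (6): while (1)--(4) are routine adjunction bookkeeping, the converse requires unpacking the fixed-point condition $U_+U_-(b)=b$ in terms of actual $U$-elements in the cut structure of $\bar S$, and some care is needed with the precise conventions of the strict ind-definable set $\bar S$ from Example \ref{completion-ind-definable} so that strict inequalities in the completion interact correctly with the inequalities in the hypotheses.
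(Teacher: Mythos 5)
The paper offers no proof of this lemma (it is dismissed as ``a straightforward check which we omit''), so there is nothing to compare against line by line; your organisation of items (1)--(4) as a Galois adjunction between $U_-$ and $U_+$ on $\bar{S}$ is correct, as are the forward implications of (5) and (6), and this is surely the intended check.

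The converse direction of (5) is where your argument breaks, and you rightly flagged it as the main obstacle. The failing step is the production of $x\in U$ with $a<x<b'$: the fixed-point hypothesis $U_+U_-(b)=b$ yields, for each $b'>b$, some $x\in U$ with $U_-(b)<x<b'$ (equivalently $b\le x<b'$, since no element of $U$ lies strictly between $U_-(b)$ and $b$), but the hypothesis $U_-(b)\le a$ points the wrong way to upgrade $U_-(b)<x$ to $a<x$; for that you would need $a<b$, which is not among the hypotheses. This is not a repairable slip in your write-up: the converse of (5) is false as stated. Take $S=\mathbb{Z}$ with the single colour $U=\{0\}$, and $b=\iota(0)$, $a=\iota(3)$. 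Then $U_-(b)=\sup\emptyset$ is the minimum of $\bar{S}$, so $U_-(b)\le a$ holds, and $U_+U_-(b)=\inf U=b$, yet $U_+(a)=\inf\emptyset$ is the maximum of $\bar{S}$, not $b$. (Structurally: the right-hand side of (5) is upward closed in $a$, whereas $\{a\mid U_+(a)=b\}$ is a convex set that is in general bounded above.) Item (6) fails dually. A correct version needs an extra conjunct forcing $U_+(a)\le b$; for instance, under the additional hypothesis $a<b$ your argument does close, since then $\{x\in U\mid x>a\}=\{x\in U\mid x>U_-(b)\}$ and hence $U_+(a)=U_+U_-(b)=b$, with the remaining case being $a=b=U_+(b)$. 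As stated, items (5)--(6) --- and consequently the use made of them in the proof of Proposition \ref{qe-order} --- need to be amended, so you should not expect to complete the proof without changing the statement.
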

The proof is a straightforward check which we omit.

The motivation of the statement of items 3,4, 5 and 6 is to express
the conditions $U_+(x)<b$ and $b<U_+(x)$ as a Boolean combination of 
conditions involving only inequalities of $x$ and functions on $b$,
and similarly for $U_-$.

For the next definition we use the following terminology
\begin{definition}
Let $X$ be a set, $\mathcal{A}$ be a collection of subsets of $X$ and 
$B$ be a subset of $X$. We say that $\mathcal{A}$ refines $B$ if for every
$A\in\mathcal{A}$ either $A\subset B$ or $A\subset X\setminus B$.

If $\mathcal{B}$ is a collection of subsets of $X$, then $\mathcal{A}$ refines
$\mathcal{B}$ if it refines every element of $\mathcal{B}$.
\end{definition}
Starting from a finite set of colors $\mathcal{U}$ one can inductively define
$\mathcal{U}_n$ consisting of a finite partition of $S$ into 
$0$-definable unary relations such that:
\begin{itemize}
\item $\mathcal{U}_0$ refines $\mathcal{U}$ and $\mathcal{U}_{n+1}$ refines 
$\mathcal{U}_n$.
\item If $F$ and $G$ are in 
$F(\cup_{k<n}\mathcal{U}_k)$ and are a composition
of less than $n$ basic functions, 
then the set $\{x:F(x)<G(x)\}$
is refined by the partition $\mathcal{U}_n$.
\item If $F$ is as in the previous item and $V\in \cup_{k<n}\mathcal{U}_k$ 
then
the set of $x$ such that $F(x)\in S$ and $F(x)\in V$ 
is refined by the partition $\mathcal{U}_n$.
\end{itemize}
Then we have a new set of colors $\mathcal{U}^c=\cup_n\mathcal{U}_n$ (no longer
necessarily finite).

In other words $\mathcal{U}^c$ is such that its finite unions are closed
under boolean combinations and that one can decide the truth value of 
$F(x)<G(x)$, by looking at the conditions $x\in V$ for $V\in \mathcal{U}^c$.

The fact that $\{x\in S\mid F(x)<G(x)\}$ is $\emptyset$-definable in $S$ 
follows from Example \ref{completion-order-definable}.
The fact that $\{x\in S\mid F(x)\in S\text{ and } F(x)\in U\}$ is
 $\emptyset$-definable follows from Example \ref{completion-ind-definable}.

Then we have

\begin{proposition}\label{qe-order}
Suppose $(S,\mathcal{U})$ is a colored linear order.
Then every $\emptyset$-definable set of $S^n$ is a boolean combination 
of sets of the form 
$\{a\in S^n:\pi(a)\in U\}$ for $U\in \mathcal{U}^c$, or of the form
$\{a\in S^n: F\pi(a)<G\pi'(a)\}$,
where $F$ and $G$ are functions in $F(\mathcal{U}^c)$,
and $\pi,\pi'$ denote either a coordinate projection or a constant function
equal to the minimum or maximum (in $\bar{S}$)

Further for every $b$-definable family $\{D_a\}_{a\in T}$ of subsets of $S$, there is a finite
$b$-definable partition of $T$ such that in each one
$D_a$ is a disjoint finite union of sets of the form
$\{x:L\pi(a,b)<x<R\pi'(a,b), x\in U\}$ or of the form
$\{x:E\pi(a,b)=x\}$,
 where here $L,R,E,\pi,\pi'$ and $U$ are as above.

Also for every $b$-definable function $f:X\to S$ with $X\subset S^n$ we have
that there is a finite $b$-definable partition of the domain $X$ such that
in each one $f$ is of the form $f(a)=F(\pi(a,b))$.
\end{proposition}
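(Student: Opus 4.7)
The plan is to prove all three parts simultaneously, with the cell decomposition (part 2) as the technical engine driving the quantifier elimination (part 1) and the function normal form (part 3) as a direct consequence. Let $\mathcal{P}$ denote the class of boolean combinations of the atomic sets appearing in the statement of part 1. I maintain inductively that every definable set belongs to $\mathcal{P}$ and that every $\mathcal{P}$-definable family of subsets of $S$ admits the stated cell decomposition with $\mathcal{P}$-definable pieces.

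First I would establish the cell decomposition for $\mathcal{P}$-formulas by induction on the boolean structure. Atomic sets of the type $\pi(x,a)\in U$ with $\pi$ projecting to $x$ are immediately cells; atomic sets not depending on $x$ simply contribute to the partition of $T$. Boolean combinations are handled by intersecting the cells of the two decompositions: the intersection $\{L_1<x<R_1, x\in U_1\}\cap\{L_2<x<R_2, x\in U_2\}$ becomes $\{\max(L_1,L_2)<x<\min(R_1,R_2), x\in U_1\cap U_2\}$, which is a cell once one partitions $T$ according to the comparisons $L_1\lessgtr L_2$ and $R_1\lessgtr R_2$ (expressible via the atomic form $F\pi<G\pi'$) and refines $U_1\cap U_2$ into a finite union of $\mathcal{U}^c$-elements (permitted by the closure of finite unions of $\mathcal{U}^c$ under boolean combinations).

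The main technical obstacle is handling atomic conditions in which $x$ appears inside a composed function, such as $F(x)<G(a_i)$; these must be rewritten as single-coordinate bounds $x\lessgtr H(a_i)$ in order to fit the cell template. I would induct on the number of basic functions in $F$. If $F=U_+\circ F'$, Lemma \ref{u-calculus}(3) immediately gives $F'(x)<U_-(G(a_i))$ and I recurse on $F'$. If $F=U_-\circ F'$, I case-split $T$ on whether $G(a_i)\in U$ and whether $F'(x)$ lies below the infimum of $U$; each case reduces to one of $F'(x)\leq G(a_i)$, $F'(x)\leq U_+(G(a_i))$, or a vacuous condition, and the predicates used for these case-splits all lie in $\mathcal{P}$ by the third closure clause in the construction of $\mathcal{U}_n$. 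The key invariant throughout is that all partition predicates lie in $\mathcal{P}$ and all bounding functions lie in $F(\mathcal{U}^c)$, which is exactly what the closure properties designed into the definition of $\mathcal{U}^c$ guarantee.

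With the cell decomposition for $\mathcal{P}$-formulas in hand, I prove part 1 by induction on quantifier rank. Given $\exists x'\,\psi(x',x,a)$ with $\psi\in\mathcal{P}$ by the inductive hypothesis, I apply the cell decomposition to $\psi$ viewed as a family in $x'$ parametrized by $(x,a)$; the existential quantifier then collapses to the assertion that some cell is nonempty, which by Lemma \ref{u-calculus}(2) becomes $U_+L(\pi)<R(\pi')$ for interval cells (already of the atomic form $F\sigma<G\sigma'$) and $E(\pi)\in S$ for singleton cells, the latter being a finite union of conditions $\pi\in W$ for $W\in\mathcal{U}^c$ by the same closure clause. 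Thus $\exists x'\,\psi$ belongs to $\mathcal{P}$. Finally, for part 3 I apply the cell decomposition to the graph $\{(a,y)\in X\times S:y=f(a)\}$ as a family in $y$ parametrized by $a\in X$; since each fiber is a singleton, on each piece of the $X$-partition (possibly after further refinement) the unique non-empty cell must be of the singleton type $\{y:E\pi(a,b)=y\}$, yielding $f(a)=E(\pi(a,b))$ on that piece.
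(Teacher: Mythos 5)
Your overall strategy is the same as the paper's: an explicit elimination procedure in which the one-variable cell decomposition (part 2) is the engine, Lemma \ref{u-calculus} (items 3--6) is used to convert conditions of the form $F(x)<G\pi(a)$ with $F$ a composition of basic functions into single bounds $x\lessgtr H\pi(a)$ by induction on the length of $F$, the closure properties built into $\mathcal{U}^c$ absorb the purely $x$-dependent conditions (the paper's cases $G(x)<F(x)$, $G(x)=F(x)$, $G(x)\in U$, $G(x)\notin S$, which you should state explicitly alongside your $F(x)<G(a_i)$ case), and the existential quantifier is eliminated via item 2 of the lemma. Parts 1 and 2 of your argument are essentially the paper's proof, modulo the usual bookkeeping about making the cells disjoint.

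There is, however, a genuine flaw in your treatment of part 3. You assert that since each fiber of the graph of $f$ is a singleton, ``the unique non-empty cell must be of the singleton type $\{y:E\pi(a,b)=y\}$.'' This is false: an interval-type cell $\{y:L\pi(a,b)<y<R\pi'(a,b),\,y\in U\}$ can perfectly well contain exactly one point (for instance, in a discrete order the successor function has a graph of exactly this shape), and no amount of refining the partition of $X$ changes the type of the cell. The missing step --- which is precisely the content of the paper's final paragraph --- is the observation that when such an interval cell is a singleton, its unique element is $U_{+}L\pi(a,b)$, so the cell can be rewritten in the form $\{y:E\pi(a,b)=y\}$ with $E=U_{+}L\in F(\mathcal{U}^c)$, and hence $f(a)=E(\pi(a,b))$ on that piece. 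With that one observation added, your proof is complete; without it, the conclusion of part 3 does not follow from what you have written.
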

\begin{proof}
We prove the first two statements at the same time with an
explicit elimination procedure.

A set defined as in the first statement will be called 
quantifier free definable.
Let $D_a\subset S$ be an $a$-definable set, where 
$D=\{(x,a)\mid x\in D_a\}$ is quantifier free definable.
 We show first that there is a finite partition of the set $T$
of the parameters $a$ 
into quantifier free definable sets,
such that $D_a$ has the form of the 
second statement for $a$ in each of the elements in the partition.
After this we show that the set of $a$ such that there exists $x$ with 
$(x,a)\in D$, is equivalent to a quantifier free definable set in $a$.

Note that there is a partition of $T$ of sets of the required form such that 
for $a$ in each element of the partition  we have that $D_a$ 
consists of a finite disjoint union of sets which are finite intersections
of sets of one of the following forms:
\begin{enumerate}
\item
$L\pi(a)<G(x)$, 
\item
$G(x)<R\pi(a)$,
\item
$G(x)=E\pi(a)$,
\item
$G(x)<F(x)$
\item
$G(x)=F(x)$
\item
$G(x)\in U$ with $U\in\mathcal{U}^c$.
\item $G(x)\notin S$.
\end{enumerate}
The last four cases are equivalent to a finite disjoint union of sets
of the form $x\in U$ with $U\in \mathcal{U}^c$.

The first three cases are each equivalent to a finite disjoint union
of conditions of the same form with $G=1$, 
and of admissible conditions on
$a$. This is a consequence of 
Lemma \ref{u-calculus}, 
(and induction in the length of the composition in $G$).
So we assume $G=1$ from now on.

In case  3 appears $D_a$ is empty or consists of a single element as required,
depending on quantifier free conditions on $a$.
(according to whether $L\pi'(a)<E\pi(a)$ or not for example).

If case 3 does not appear we can further divide $a$ into 
admissible cases to pick
out the largest of the lower bounds in case 1, 
and the smallest of the upper bounds in case 2,
and so we obtain that $D_a$ is of the desired form.

Now we show that the set of $a$ with $D_a$ not empty is 
quantifier free definable.
 This finishes the proof of the first and second
statements. This is a consequence of Lemma \ref{u-calculus}, item 2.

To see the statement on definable functions we have to see that if 
$\{x:L\pi(a)<x<R\pi'(a), x\in U\}$
consists of only one point then the set is a singleton of the form 
required. In this case one has 
$x=U_{+}L\pi(a)$.
\end{proof}
\begin{proposition}\label{closed-bounded-linear-order}
Suppose $S$ is a colored linear order. Assume we have a definable family
$D_r\subset S^n$ indexed by $r\in S$ which is, in each coordinate, uniformly 
bounded above. 
Then there is $r_0$ and a finite definable partition of $\{r: r\geq r_0\}$
such that in each element of the partition  $D_r$ is constant.
\end{proposition}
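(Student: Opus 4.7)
The plan is to apply the quantifier elimination from Proposition \ref{qe-order} to the graph $D = \{(r,x) \in S^{n+1} \mid x \in D_r\}$ and then analyze each atomic subformula separately. After $D$ is written as a Boolean combination of atomic formulas of the form $\pi \in U$ (with $U \in \mathcal{U}^c$) and $F\pi < G\pi'$ (with $F,G \in F(\mathcal{U}^c)$ and $\pi,\pi'$ coordinates of $(r,x_1,\ldots,x_n)$ or constants), the goal is to show that each atomic formula involving $r$ can, on each piece of a finite definable partition of $\{r \geq r_0\}$, be replaced either by a constant truth value or by a condition in $x$ alone. The common refinement of these finitely many partitions then makes $D_r$ constant on each element of the resulting partition.

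The main technical ingredient is the following sub-lemma: for any $F \in F(\mathcal{U}^c)$ and any $M \in \bar{S}$, there are $r_0 \in S$ and a finite definable partition of $\{r \geq r_0\}$ on each piece of which either $F(r) > M$ throughout the piece, or $F$ takes a single constant value on the piece. I would prove this by induction on the length of the composition of basic functions defining $F$. For the base case $F = U_-$, if $U_-(r) \leq M$ on a cofinal set of $r$ then $U \cap (M,\infty)$ must be empty, which forces $U_-(r) = \sup U$ for $r$ past the relevant threshold; for $F = U_+$, the set $\{r : U_+(r) \leq M\}$ is essentially bounded above by $M$, so on a cofinal tail one simply has $F(r) > M$ uniformly. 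For the inductive step $F = G \circ H$ with $H$ basic, the downward-closed set $\{y \in \bar{S} : G(y) \leq M\}$ has a supremum $M' \in \bar{S}$, so that $\{r : F(r) \leq M\} = \{r : H(r) \leq M'\}$ up to the boundary, and the sub-lemma applied to $H$ with bound $M'$ yields a partition; on pieces where $H$ is constant, $F = G \circ H$ is constant, and on pieces where $H(r) > M'$ one gets $F(r) > M$.

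Granting the sub-lemma, color conditions $r \in U$ and purely-in-$r$ comparisons give immediate binary partitions of $\{r \geq r_0\}$. For a mixed comparison $F(r) < G(x_i)$, the uniform bound $x_i \leq M_i$ yields $G(x_i) \leq G(M_i)$, and the sub-lemma with $M = G(M_i)$ splits $\{r \geq r_0\}$ into finitely many pieces on each of which either $F(r) > G(M_i) \geq G(x_i)$ (so the comparison is identically false) or $F(r) = c$ is constant (so the comparison is equivalent to $c < G(x_i)$, a condition in $x_i$ alone); the dual case $F(x_i) < G(r)$ is handled symmetrically by splitting on $G(r) > F(M_i)$ versus $G(r) = c'$ constant. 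The main obstacle is the sub-lemma itself, particularly tracking cuts in $\bar{S} \setminus S$ cleanly through the inductive step and making precise the base case $F = U_+$, where the ``$F(r) > M$'' branch rather than the ``$F$ constant'' branch is what is used; Lemma \ref{u-calculus} parts (3) and (4) should provide the clean translation between bounds and thresholds needed here.
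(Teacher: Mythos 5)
Your overall architecture is sound and is essentially the one the paper uses: everything reduces to a dichotomy for functions $F\in F(\mathcal{U}^c)$ on a tail of $S$ (either $F$ escapes past the relevant bound, or $F$ is constant), proved by induction on the length of the composition. However, your sub-lemma is not correctly formulated, and the gap is not merely technical. First, the statement is false when $M=\max\bar{S}$: take $U=S$ dense, so that $U_-(r)=r$; then $F=U_-$ is injective on every tail, hence not piecewise constant on finitely many pieces, while $F(r)>M$ is impossible. Second, your application genuinely hits this case: in the mixed comparison $F(r)<G(x_i)$ you set $M=G(M_i)$, and $G(M_i)$ equals $\max\bar{S}$ whenever, say, $G=U_+$ with $U$ bounded above by the uniform bound $M_i$ (a perfectly possible configuration, since the colors in $\mathcal{U}^c$ may be bounded). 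Third, even if you restrict to $M<\max\bar{S}$, your inductive step does not preserve this restriction: for $F=G\circ H$ you pass to $M'=\sup\{y: G(y)\leq M\}$, and $M'=\max\bar{S}$ whenever $G$ is globally bounded by $M$ (e.g.\ $G=U_-$ with $U\subset(-\infty,M]$). In that situation the conclusion for $F$ is actually still true, but your proof derives it from an instance of the sub-lemma that is false, so the induction collapses.

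The repair is exactly the point of the paper's argument. Instead of a fixed threshold $M$, one uses the relation $d\ll a$, meaning $G(d)<a$ for every $G\in F(\mathcal{U}^c)$ with $G(d)\neq\infty$, and proves the dichotomy in the form: either $d\ll F(a)$ for every $a$ with $d\ll a$, or $F$ is constant on $\{a: d\ll a\}$. Because the threshold is closed under applying the functions themselves, this statement is stable under composition, so it only has to be checked for the basic functions $U_+$ (where $a\leq U_+(a)$ gives the first branch) and $U_-$ (where $G(d)<U_-(a)\Leftrightarrow U_+G(d)<a$ via Lemma \ref{u-calculus}, and failure forces $U$ bounded and $U_-$ eventually constant). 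The paper also sidesteps your $G(x_i)$ difficulty entirely by invoking the second statement of Proposition \ref{qe-order}, which presents $D_r$ as cells $\{x: L\pi(r)<x<R\pi'(r),\ x\in U\}$ and singletons $\{E\pi(r)\}$ with the variable $x$ appearing bare, and then inducts on $n$; you should either do the same or justify separately how to handle the subcases $G(x_i)=\infty$ versus $G(x_i)\in S$ and which element of $F(\mathcal{U}^c)$ applied to the bound actually dominates $\sup\{G(y): y\leq M_i,\ G(y)\neq\infty\}$.
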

\begin{proof}
The statement is trivial if $S$ has a maximum, so assume it does not.

First assume $n=1$. 
Suppose $d$ is an upper bound for $D_r$.
From the description of definable sets in one dimension in 
Proposition \ref{qe-order} we obtain that the result easily from the 
following observation:

If $F\in F(\mathcal{U}^c)$
then either $d\ll F(a)$ for every $a\in \bar{S}$ such that $d\ll a$,
or $F(a)=F(a')$ for all $a,a'$ such that $d\ll a,a'$.
Here the notation $d\ll a$ means $G(d)< a$ for all 
$G\in F(\mathcal{U}^c)$ such that $G(d)$ is not $\infty$.

This condition on $F\in F(\mathcal{U}^c)$ 
is preserved under compositions so we just have to prove it
for $F=U_+$ and $F=U_-$. The first case 
follows as $a\leq U_+(a)$.
The second case follows from Lemma \ref{u-calculus},
indeed $G(d)<U_-(a)$ is equivalent to $U_+G(d)<a$ 
so this condition can only fail when $U_+G(d)=\infty$, which means
that $U\subset [-\infty,G(d)]$ and so its supremum is $U_-(a)$ which does
not depend on $a$.

For the case $n>1$ 
consider the the family 
$\{D_{ba}\}_{b\in (-\infty,d)^{n-1}}$ of one dimensional sets, defined as
$D_{ba}=\{x\in S\mid (x,b)\in D_a\}$. By 
the argument above one obtains
a finite partition into definable
subsets of $(-\infty,d)^{n-1}\times S$ such that for $E$ in the partition, $(b,a),(b,a')\in E$ and
$d\ll a, a'$ we have $D_{ba}=D_{ba'}$. By induction on $n$ the families $E_a$ are definably piecewise constant
eventually, and this finishes the proof.
\end{proof}

\begin{proposition}\label{bounded-functions-ordered-group}
Let $\Gamma$ be an ordered commutative group, and let
$E_a$ be a bounded definable family of finite sets indexed by $\Gamma$.
Then for $a$ sufficiently large and definable piecewise in $a$, $E_a$ is
constant.
\end{proposition}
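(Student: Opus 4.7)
The plan is to reduce this statement, via the Cluckers--Halupczok quantifier elimination for ordered abelian groups relative to the spine, to the analogous statement for colored linear orders already established in Proposition \ref{closed-bounded-linear-order}. The spine is a definable colored linear order interpretable in $\Gamma$, and the known QE says that every definable set of $\Gamma^n$ is built from pullbacks of spine-definable sets together with finitely many $\mathbb{Z}$-linear inequalities and congruences. So the structure of a definable family in $\Gamma$ is, up to linear/congruence data, controlled by a family in the spine.

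First I would reduce to the case of a single function. Since $E_a$ is finite with a uniform bound on its cardinality, I can definably enumerate $E_a=\{f_1(a),\dots,f_k(a)\}$ (say, in increasing order on each piece of a suitable partition) and treat each $f_i$ separately. Restricting to a coordinate of $E_a\subseteq \Gamma^m$ further reduces the problem to showing: a bounded definable function $f:\Gamma\to\Gamma$ is, for $a$ sufficiently large, constant on each piece of a finite definable partition.

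Next I would analyze $f$ through the Cluckers--Halupczok QE. On a finite definable partition of the domain, $f(a)$ can be written in the form $L(a)+g(\pi(a))$, where $L$ is a $\mathbb{Z}$-linear term, $\pi$ is the projection to the spine (modulo a definable convex subgroup), and $g$ is a spine-definable function, possibly with an additional congruence condition on $a$. Boundedness of $f$ on the tail $\{a\geq a_0\}$ forces the linear part $L(a)$ to be bounded on this tail, which on an unbounded set in an ordered abelian group means the coefficient of $a$ in $L$ is zero; thus on each piece $f$ factors through $\pi$ and the congruence class of $a$ modulo a fixed $n$.

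Finally, for each congruence class of $a$, the induced family $g\circ\pi$ is a bounded definable family on the spine indexed by $\pi(a)$. Applying Proposition \ref{closed-bounded-linear-order} to the spine yields a finite partition on which $g\circ\pi$ is constant for $\pi(a)$ eventually. Pulling this partition back to $\Gamma$, together with the finitely many congruence classes, gives the required definable piecewise-constancy of $E_a$ for $a$ large. The main obstacle is the second step: one must carefully invoke the Cluckers--Halupczok normal form and argue that boundedness genuinely kills the $\mathbb{Z}$-linear contribution, rather than merely controlling it on a small interval; handling the congruence conditions cleanly alongside the factorization through the spine is the delicate bookkeeping in the argument.
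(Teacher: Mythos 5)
Your opening reduction (enumerate the finite set $E_a$ in increasing order to obtain finitely many definable functions, then treat a single bounded definable $f:\Gamma\to\Gamma$) is exactly the paper's first move. Where you diverge is in how you analyze $f$, and this is where the proposal has a real problem. The paper simply cites Corollary 1.10 of \cite{CluHal}: piecewise on a finite definable partition of $\Gamma$, $f(a)=\tfrac{1}{e}(za+\gamma)$ with $e,z$ integers and $\gamma\in\Gamma$ a \emph{constant}; boundedness of $f$ on an unbounded piece forces $z=0$, so $f$ is constant on that piece, and the proof is over. Your normal form $f(a)=L(a)+g(\pi(a))$ with $g$ a spine-valued (or spine-definable) function is not what Cluckers--Halupczok prove: the spine enters only in cutting out the pieces of the partition, not as an additive contribution to the value of $f$, and indeed a $\Gamma$-valued summand $g(\pi(a))$ does not even typecheck if $g$ is a function into the spine. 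Consequently your entire final step is built to handle a term that is not there, and moreover it is not a legitimate application of Proposition \ref{closed-bounded-linear-order} as stated: that proposition concerns definable families of \emph{subsets of} the colored linear order $S$, uniformly bounded above in $S^n$, whereas your $g\circ\pi$ would be a $\Gamma$-valued function indexed by the spine, so the proposition does not apply without further argument (one would also have to relate ``eventually in $\Gamma$'' to ``eventually in the spine'' through $\pi$). There is also a smaller gap in your step that ``boundedness of $f$ forces $L$ bounded'': with an unbounded $g\circ\pi$ present the two parts could in principle cancel, so this needs justification in your setup, though it is vacuous in the correct normal form. None of this makes the conclusion unreachable --- substituting the actual statement of Corollary 1.10 collapses your argument to the paper's three-line proof --- but as written the key middle step invokes a decomposition that the cited quantifier elimination does not provide.
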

\begin{proof}
We may assume $E_a\subset \Gamma$ and that they have the same cardinality. 
Because $\Gamma$ comes with an order we may assume this cardinality is 1,
so $E_a=\{f(a)\}$. Definably piecewise in $a$ the function $f$ is affine,
see Corollary 1.10 of \cite{CluHal}
and because it is bounded it is constant in the unbounded pieces.
\end{proof}
\begin{proposition}\label{closed-bounded-ordered-group}
Let $\Gamma$ be an ordered commutative group, and let
$E_a\subset \Gamma$ be a decreasing definable family 
of bounded sets indexed by 
$a\in \Gamma$. Then there is $a_0$ such that if $a>a_0$ then 
$E_a=E_{a_0}$.
\end{proposition}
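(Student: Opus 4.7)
The plan is to reduce via the Cluckers--Halupczok quantifier elimination for ordered abelian groups \cite{CluHal} to the structure of the spine, and then apply Propositions \ref{closed-bounded-linear-order} and \ref{bounded-functions-ordered-group}. First I reduce to the uniformly bounded case: fix $a_0$ in the parameter set; then $E_{a_0} \subseteq [-M, M]$ for some $M$, and by the decreasing assumption $E_a \subseteq [-M, M]$ for all $a \geq a_0$. So I may assume $E = \{(x,a) : x \in E_a\} \subseteq \Gamma^2$ has all fibers $E_a$ contained in $[-M, M]$.

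By the quantifier elimination of \cite{CluHal}, the defining formula of $E$ is a boolean combination of (i) spine-predicate conditions on affine terms $t_i(x, a) = \alpha_i x + \beta_i a + \gamma_i$ with integer coefficients, (ii) congruences $t_i(x, a) \equiv c_i \pmod{n_i}$, and (iii) comparisons among the $t_i$. Applying Proposition \ref{closed-bounded-linear-order} to the spine (a colored linear order) and Proposition \ref{bounded-functions-ordered-group} to the congruence class data, the subformula built from atomic conditions involving only $a$ is eventually constant on the pieces of a finite definable partition of $\{a > a_0\}$. Passing to a cofinal piece $P$, and using that $E_a$ is decreasing, it suffices to show $E_a$ is constant on a cofinal subset of $P$.

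On $P$, the only remaining $a$-dependence enters through terms $t_i(x, a)$ with $\alpha_i \neq 0$ and $\beta_i \neq 0$ (terms with $\beta_i = 0$ depend only on $x$ and parameters; terms with $\alpha_i = 0$ are frozen by the passage to $P$). For $x \in [-M, M]$ and $a$ large in $P$, the spine type of such a term is determined by that of $\beta_i a + \gamma_i$, which is constant on $P$, up to a bounded perturbation; the calculus of Lemma \ref{u-calculus} then forces the relevant spine and comparison conditions to stabilize to values uniform in $x$. Thus $E_a$ is defined by a formula in $x$ alone on a cofinal sub-piece of $P$, and the decreasing hypothesis gives the required $a_0$. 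The main obstacle is this last step: verifying that a spine predicate applied to $\alpha_i x + \beta_i a + \gamma_i$ stabilizes uniformly in $x \in [-M, M]$ once the spine type of the $a$-only affine terms is frozen, which is where the bounded-perturbation calculus of Lemma \ref{u-calculus} and the cofinality of $\beta_i a$ in $\Gamma$ are essential.
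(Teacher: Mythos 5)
Your high-level strategy matches the paper's: reduce to uniformly bounded fibers, invoke the Cluckers--Halupczok relative quantifier elimination, and feed the resulting data into Propositions \ref{closed-bounded-linear-order} and \ref{bounded-functions-ordered-group}. But the step you yourself flag as the main obstacle is where the argument has a genuine gap, in two places. First, congruence conditions $\alpha x+\beta a+\gamma\equiv c \pmod{n}$ with $\alpha,\beta\neq 0$ do not stabilize on your piece $P$: for fixed $x$ in the bounded interval the truth value oscillates with $a$ (already in $\Gamma=\mathbb{Z}$), and your partition is extracted only from the atomic conditions involving $a$ alone, so it does not control the class of $\beta a$ modulo $n$. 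The same problem occurs for the coset-valued spine maps $\mathfrak{s}_{p^r}$ applied to mixed terms; ``bounded perturbation'' does not help, since these maps are sensitive to cosets, not to order of magnitude. The paper's fix is to restrict to $a$ divisible by every natural number (such $a$ are cofinal by saturation, and decreasingness upgrades constancy on this cofinal set to eventual constancy); on that set $\mathfrak{s}_{p^r}(\alpha x+\beta a+\gamma)=\mathfrak{s}_{p^r}(\alpha x+\gamma)$ and the congruences lose their $a$-dependence. You need this, or an equivalent refinement of $P$ into congruence classes, and it is absent.

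Second, your claim that the spine conditions on mixed terms ``stabilize to values uniform in $x$'' is not what happens for the $\mathfrak{t}_p$-type maps. For $x$ bounded and $a$ large one gets $\mathfrak{t}_p(\alpha x+\beta a+\gamma)=\mathfrak{t}_p(a)$, which is uniform in $x$ but still depends on $a$: the $a$-dependence of $E_a$ factors through the point $\mathfrak{t}_p(a)$ of the spine rather than disappearing. The remaining work --- and the actual role of Proposition \ref{closed-bounded-linear-order} in the paper --- is to apply the eventual-constancy result for colored linear orders to the induced family $D_{\mathfrak{t}_p(a)}$ indexed by the spine, using that uniform boundedness of $E_a$ forces boundedness of its image in the spine. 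Lemma \ref{u-calculus} is not the relevant tool here; it is internal to the analysis of colored linear orders and says nothing about how $\mathfrak{s}$ and $\mathfrak{t}$ respond to perturbing their argument. With the divisibility restriction added and the spine-indexed family treated via Proposition \ref{closed-bounded-linear-order}, your outline becomes the paper's proof; as written, both ingredients are missing.
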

\begin{proof}
This proof becomes simpler if $\Gamma$ has finite spines.

We will use the relative quantifier elimination of \cite{CluHal}.
We use the language $L_{syn}$ defined there. Note that the Theorem 1.13
of \cite{CluHal} implies that $L_{syn}$ eliminates quantifiers relative
to the spine. (In fact it is equivalent to that, see Proposition 1.11 of
\cite{CluHal}).

We will show that for $a$ divisible (by every natural) 
and sufficiently large $E_a$ is constant. As the family is decreasing
this is enough.

We actually show the stronger property 
that if the family is uniformly bounded then there
 is a finite definable partition of the set of $a$, such that
for $a$ divisible and sufficiently 
large in one of the elements of the partition then
$E_a$ is constant. (The condition of $a$ being divisible is necessary for 
this stronger property).
Equivalently, if $M=(-d,d)$ is some fixed interval then
$M\cap E_a$ satisfies the required condition for any family $E_a$.
Note that this reformulation is closed under boolean combinations.

We have that the set
$E$ is given by a boolean combination of 
 atomic formulas in $\Gamma$ or inverse images
of definable subsets of the spine by a term in the language. So we may
assume $E$ is either an atomic formula in $\Gamma$ or the inverse image
of a definable subset of the spine by a term.

Assume first we have $\rho(b,a)$ a term in $a,b$ into the spine, and 
$D$ a definable subset of the spine such that $E=\rho^{-1}(D)$.
Replacing $D$ by the image under $\rho$ we may assume $E\to D$ is surjective.

Note that $\rho$ is a tuple of functions of the form 
$\mathfrak{s}_{p^r}(ra+sb+c)$, and $\mathfrak{t}_p(ra+sb+c)$,
where $c$ are constants
which define the family $E_a$, and $r$ and $s$ are natural numbers.
Note that $\mathfrak{s}_{p^r}(ra+sb+c)=\mathfrak{s}_{p^r}(sb+c)$ because 
$a$ is divisible.
Note also that for $a$ sufficiently large one has 
$\mathfrak{t}_p(ra+sb+c)=\mathfrak{t}_p(a)$.
We conclude that $\rho(a,b)=(\rho'(b),\mathfrak{t}_p(a))$ and so
$E_a=\rho'^{-1}D_{\mathfrak{t}_p(a)}$. Finally note that 
$E_a$ being uniformly bounded implies that 
$D_{\mathfrak{t}_p(a)}=\rho'(E_a)$ is uniformly bounded above, so we finish
by the Proposition \ref{closed-bounded-linear-order}.

Now assume that $E$ is given by an atomic formula in the sort $\Gamma$.
If $E$ is given by the condition $ra<sb+c$ then the intersection with any
interval fixed interval becomes empty, the whole interval, or does not
depend on $a$ for $a$ large, depending on whether $r>0$, $r<0$ or $r=0$.
Now assume $E$ is given by the condition $ra+sb+c\equiv_m 0$. 
Then because $a$ is divisible, this condition does not depend in $a$.
Similarly we have the condition $ra+sb+c\equiv_m k_{\bullet}$ does not depend on 
$a$. We are left with the condition $ra+sb+c=k_{\bullet}$. For $a$ sufficiently
large and divisible by $k+1$ this conditions never occurs. This finishes
the proof.
\end{proof}
\section{Definable compactness}\label{seccom}
Nowak shows, for example,
 that in a 1-h-minimal field of residue characteristic $0$ with 
algebraic RV the image of a closed bounded set under a continuous definable
map is closed and bounded in \cite{Nowak} Proposition 5.4.
In this section we give a different proof of this fact.
In fact this proof works in a somewhat more general setting described next.
This is not an idle generalization as at the moment 
the mixed characteristic case in the unbounded ramification case
needs the slight extra generality.

\begin{situation}\label{closed-bounded-h0}
Here we consider a valued field $K$ such that
\begin{enumerate}
\item $K$ is definably spherically complete. In other words
any definable family of balls which form a chain has a nonempty intersection.
\item The map $rv:K^{\times}\to RV$ is opaque.
In other words for every definable set $X\subset K^{\times}$ the set
$rv(X)\cap rv(K^{\times}\setminus X)$ is finite.
\item Every definable set $S\subset k\times \Gamma$ is a boolean combination
of cartesian products of sets definable in $k$ and in $\Gamma$.
\item Every definable subset of $\Gamma^n$ is definable in the ordered
group language of $\Gamma$.
\end{enumerate}
\end{situation}
Conditions 1 and 2 are true in a 0-h-minimal field of residue characteristic
$0$, (if $C$ prepares $X$ then the set mentioned in 2 is contained in $C$,
and if $C$ prepares the family of balls in 1, then an element of $C$ 
will be in the intersection).
Condition 3 and 4 are true in any henselian valued field of 
residue characteristic $0$ with the 
valued field language. 
This follows for example from Denef-Pas quantifier elimination, see for 
instance \cite{pas89}.
This is a quantifier elimination relative to $RV$ where 
$K$ comes equipped with a cross section of
$RV\to \Gamma$, in
the ordered group language in 
$\Gamma$, the field language in $k$ and the angular component map
$K^{\times}\to k^{\times}$ coming from the cross section.
In this case $\Gamma$ and $k$ are orthogonal because there are no relation
or function symbols connecting the two sorts.

For convenience of reference of the next section we make this argument
explicit.
\begin{lemma}\label{orthogonal}
Suppose that we have a model $M$ 
in a many sorted relational language $\mathcal{L}$, and suppose
that the collection of sorts is partitioned into two sets $\mathcal{A}$
and $\mathcal{B}$. We consider $\mathcal{L}_\mathcal{A}$ and 
$\mathcal{L}_\mathcal{B}$ the restrictions of $L$ to these sorts.
Suppose every basic relation $R\subset A\times B$ (or a coordinate 
permutation of $A\times B$) is a boolean 
combination of cartesian products of sets definable in $A$ in
$\mathcal{L}_{\mathcal{A}}$ and sets
definable in $B$ in $\mathcal{L}_{\mathcal{B}}$, where here
$A$ is a product of the sorts in $\mathcal{A}$ and $B$ is a product of the
sorts in $\mathcal{B}$. 
Then every definable set $X\subset A\times B$ is a boolean combination
of a cartesian product of sets definable in $A$ in $\mathcal{L}_{\mathcal{A}}$
and sets definable in $B$ in $\mathcal{L}_{\mathcal{B}}$.
\end{lemma}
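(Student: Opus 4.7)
The plan is to prove this by induction on formula complexity. Let $\mathcal{F}$ denote the collection of subsets of products $A \times B$ (with $A$ a product of sorts in $\mathcal{A}$ and $B$ a product of sorts in $\mathcal{B}$) that are boolean combinations of rectangles $U \times V$ with $U$ an $\mathcal{L}_\mathcal{A}$-definable subset of $A$ and $V$ an $\mathcal{L}_\mathcal{B}$-definable subset of $B$. A preliminary observation, used repeatedly below, is that every element of $\mathcal{F}$ can be written as a \emph{finite disjoint} union of such rectangles: starting from finitely many rectangles $U_i \times V_i$ that generate a given boolean combination, the atoms of the finite boolean algebra they generate factor as $P \times Q$, where $P$ ranges over atoms of the boolean algebra generated by the $U_i$ and $Q$ over atoms of the one generated by the $V_j$.

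For the base case, atomic formulas are either pure --- involving variables only from sorts in $\mathcal{A}$ or only from sorts in $\mathcal{B}$, hence trivially rectangles --- or mixed, in which case they lie in $\mathcal{F}$ by the hypothesis of the lemma after a coordinate permutation grouping the variables by side. Closure of $\mathcal{F}$ under boolean operations is immediate from the definition.

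The main step, and the only mildly delicate one, is closure under existential quantification. Suppose $\phi(x, y, z) \in \mathcal{F}$, where the tuples $x$ and $y$ have sorts in $\mathcal{A}$ and $z$ has sorts in $\mathcal{B}$. Using the disjoint-rectangle normal form, write $\phi$ as $\bigsqcup_k U_k(x, y) \times V_k(z)$. Then
\[
\exists y \, \phi(x, y, z) \;=\; \bigcup_k \bigl(\exists y \, U_k(x, y)\bigr) \times V_k(z),
\]
which is again in $\mathcal{F}$ because each $\exists y \, U_k(x,y)$ remains $\mathcal{L}_\mathcal{A}$-definable as a projection of an $\mathcal{L}_\mathcal{A}$-definable set. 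The case where $y$ has sort in $\mathcal{B}$ is entirely symmetric. For definable sets with parameters from $M$, note that every parameter has a definite sort and so lies in either $\mathcal{A}$ or $\mathcal{B}$; treating parameters as additional coordinates reduces to the $\emptyset$-definable case, and specializing preserves membership in $\mathcal{F}$. The only real content is the distributivity step displayed above, which becomes purely formal once the disjoint-rectangle normal form is in hand.
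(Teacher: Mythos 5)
Your proof is correct and follows exactly the route the paper intends: the paper dismisses this lemma with the single line ``the proof is an easy induction on the quantifier complexity of $X$,'' and your argument is precisely that induction, with the disjoint-rectangle normal form supplying the distributivity of $\exists$ over the decomposition. No discrepancies to report.
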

The proof is an easy induction on the quantifier complexity of $X$.

Note also that all the conditions in Situation \ref{closed-bounded-h0}
are true in $ACVF$ (in any characteristic),
and also their analytic expansions.
Typically however condition 2 does not hold for valued fields in 
mixed characteristic. We deal with them later.
\begin{proposition}\label{closed-bounded-0}
Suppose $K$ is a valued field as in Situation \ref{closed-bounded-h0}.
Then the intersection of a 
decreasing family of nonempty bounded closed
sets indexed by $\Gamma$ is nonempty.
\end{proposition}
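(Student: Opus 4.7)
I would proceed by induction on the ambient dimension $n$, with the one-dimensional case being the technical core. The strategy is to use opacity (condition~(2)) to reduce the question to one about definable subsets of $RV$, then combine orthogonality of $k$ and $\Gamma$ (condition~(3)), the purity of $\Gamma$ (condition~(4)), and Proposition~\ref{closed-bounded-ordered-group} to force stabilization, after which definable spherical completeness (condition~(1)) yields a concrete element of $\bigcap_\gamma C_\gamma$.

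For $n=1$: given a decreasing family $\{C_\gamma\}_{\gamma\in\Gamma}$ of nonempty closed bounded subsets of $K$, opacity allows the decomposition
\[
C_\gamma \;=\; \rv^{-1}(A_\gamma)\;\sqcup\;R_\gamma,
\]
where $A_\gamma=\{r\in RV:\rv^{-1}(r)\subseteq C_\gamma\}$ is a decreasing definable family in $RV$ and $R_\gamma$ is contained in finitely many boundary $\rv$-fibers. Any $r\in\bigcap_\gamma A_\gamma$ immediately produces an element of $\bigcap_\gamma C_\gamma$ via $\rv^{-1}(r)$, so the goal is to find such an $r$. Project via $v:RV\to\Gamma$. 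If $v(A_\gamma)$ is unbounded above for every $\gamma$, then $C_\gamma$ contains elements of arbitrarily large valuation and closedness forces $0\in\bigcap_\gamma C_\gamma$; otherwise $v(A_\gamma)$ is eventually a bounded decreasing definable family in $\Gamma$, and by condition~(4) together with Proposition~\ref{closed-bounded-ordered-group} it stabilizes at a nonempty $V\subseteq\Gamma$. For $\delta\in V$, the definable family $\{A_\gamma\cap v^{-1}(\delta)\}_\gamma$ has parameters only in $\Gamma$, so by orthogonality of $k$ and $\Gamma$ (via Lemma~\ref{orthogonal}) it takes only finitely many distinct values as $\gamma$ varies; being decreasing, it is eventually constant at a nonempty set, yielding the desired stable $r$.

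The main obstacle is the complementary case where $A_\gamma$ is eventually empty: then $C_\gamma$ lies in finitely many boundary $\rv$-fibers whose number and location vary with $\gamma$. I would apply Proposition~\ref{bounded-functions-ordered-group} to the $\Gamma$-parameters of these finitely many balls to select, for large $\gamma$, a single fiber $\rv^{-1}(r_\gamma)$ meeting every $C_\gamma$, and then rescale and recurse on the family $\{C_\gamma\cap\rv^{-1}(r_\gamma)\}$ at a strictly finer valuation scale. Either this recursion terminates in the bulk case above, or it produces an infinite descending chain of balls, to which definable spherical completeness (condition~(1)) applies to yield the required element. For $n>1$, I would apply the $n=1$ case to the closures $\overline{\pi_1(C_\gamma)}$ to pick a first-coordinate candidate $a$, then use definable choice together with spherical completeness to lift $a$ to an actual first coordinate of points in $C_\gamma$, producing a coherent nested family of fibers in $K^{n-1}$ to which the induction hypothesis applies; the main difficulty here, which I expect to require further use of Proposition~\ref{closed-bounded-ordered-group}, is ensuring that the lifted fibers remain nonempty, nested, closed, and bounded across the entire family.
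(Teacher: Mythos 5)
Your opening moves for $n=1$ --- using opacity to split $C_\gamma$ into full $\rv$-fibers plus an exceptional part, stabilizing $v(C_\gamma)$ via Proposition \ref{closed-bounded-ordered-group} and condition (4), stabilizing the fibers over a fixed $\delta$ via orthogonality, and stabilizing the exceptional valuations via Proposition \ref{bounded-functions-ordered-group} --- are essentially the paper's argument, and they correctly reduce to the case where $\rv(C_\gamma)$ is eventually constant and finite. The genuine gap is in what you call the main obstacle. Your plan to ``select a single fiber $\rv^{-1}(r_\gamma)$ meeting every $C_\gamma$, then rescale and recurse at a strictly finer valuation scale'' is not a definable procedure: at each stage the exceptional fibers may split into several sub-fibers (the family can branch), and the choice of which branch to follow is made externally. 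If the recursion does not terminate, the descending chain of balls it produces is the output of an infinite non-definable selection process, and condition (1) applies only to \emph{definable} chains of balls, so spherical completeness cannot be invoked on it. The paper has to work precisely here: it introduces the notion of an unbranched family, rules out infinite branching by a K\"onig's lemma argument on the tree of balls combined with $\omega$-saturation (getting a point in a non-definable chain by saturation, not by condition (1), and then deriving a contradiction with the finiteness of the eventual $v(C_r)$), and in the unbranched case applies spherical completeness to the genuinely definable family of \emph{all} open balls that eventually contain the family, concluding by a minimality argument on the resulting closed ball. Without some substitute for this branching analysis your endgame does not close.

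A secondary issue: for $n>1$ you explicitly leave open how to keep the lifted fibers nonempty, nested, closed and bounded, and your appeal to ``definable choice'' is not available in this setting. The paper's Proposition \ref{closed-bounded-0.5} resolves this cleanly and without choice: having found $a\in\bigcap_r\overline{\pi_1(C_r)}$ by induction, it forms the two-parameter family $D_{r,s}=\overline{\pi_2\bigl(C_r\cap\pi_1^{-1}(B_s(a))\bigr)}$ of closed bounded subsets of $K$ and applies the one-dimensional statement to the diagonal $D_{r,r}$, which is again a decreasing family indexed by $\Gamma$; the resulting point $b$ satisfies $(a,b)\in\bigcap_r C_r$ by closedness. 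You should adopt something like this diagonal trick rather than attempting to lift $a$ to actual points of each $C_\gamma$.
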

\begin{proof}
Suppose we have a fixed family
$\{C_r\}_r$
 like that with empty intersection.
First we prove that $rv(C_r)$ is eventually constant equal to a finite set.

Note that $v(C_r)$ is a bounded decreasing family, and so by 
Proposition \ref{closed-bounded-ordered-group}
it is eventually constant equal to some set $R$.
On the other hand if $E_r\subset RV$ is the exceptional set of $C_r$,
that is $rv(C_r)\cap rv(K^{\times}\setminus C_r)$,
then $v(E_r)$ is bounded definable family of finite sets, and so definably
piecewise in $r$ it is eventually constant, 
by Proposition \ref{bounded-functions-ordered-group}.
If $R$ is infinite we conclude that there is a
$t\in R$ not in $v(E_r)$ for all $r$ sufficiently large large.
Note that
$J_t=v^{-1}(t)\subset RV$ is definably isomorphic to $k^{\times}$, 
so by orthogonality $rv(C_r)\cap J_t$ is eventually constant.
Any point in the eventual set has an $rv$ fiber in the intersection of $C_r$.
We conclude that $v(C_r)$ is eventually constant equal to a finite set.

If $t\in v(C_r)$, then, 
as noted before $J_t\cap rv(C_r)$ is eventually constant,
also $E_r\cap J_t$ is eventually constant definably piecewise in $r$,
so if the eventual value of $J_t\cap rv(C_r)$ is infinite then any point
in this eventual value and not in $E_r\cap J_t$ for $r$ large has its
$rv$ fiber in the intersection. We conclude that $rv(C_r)$ is eventually
constant and finite, as desired.

Let us call a decreasing family unbranched if
whenever it is contained in a finite union of disjoint open balls eventually,
then it is contained in one of these open balls eventually.
Now we prove $\{C_r\}_r$ is eventually contained in a finite union of open
balls such that the intersection of the family with each one is unbranched.
If not then we can form an infinite tree with finite levels, each point
corresponding to an open ball,
branching at each point not a leaf,
such that points later in the tree correspond to smaller open balls
and incomparable nodes correspond to disjoint open balls, the 
family is eventually contained in the union of the balls corresponding
to a maximal antichain at finite levels, 
and the family has nonempty intersection with
all balls in the tree. By König's Lemma we may find an infinite
chain in the tree. By $\omega$-saturativity we may assume there is a point
in the intersection, and translating we may assume this point is $0$.
If $B$ is a ball in the tree not in the chain, then the valuation of $B$
is in the eventual value of $v(C_r)$, so we conclude that this is infinite
in contradiction with the above.

So now assume the family is unbranched. Take the family of open balls
that eventually contain the family. By definable spherical completeness
there is a point in the intersection, and again we may assume this point is
$0$. As $v(C_r)$ is eventually constant equal to a finite set we conclude
that this family of open balls actually intersect in a closed ball around
the origin (of radius the minimum of the eventual set of $v(C_r)$).
As the $rv$ values are also eventually constant we conclude that $C_r$
is eventually contained in a union of a finite number of disjoint open balls
of smaller radiuses, the unbranched hypothesis now says that it is 
eventually contained in one of these balls, and this contradicts the 
choice of the closed ball as the minimal containing $C_r$ eventually.
\end{proof}
After this, standard topological arguments about compactness go through, 
we write this next.
For convenience of reference we interpolate the conclusion of the previous
proposition as a hypothesis
\begin{situation}\label{closed-bounded-h1}
In this situation we consider a valued field $K$ satisfying the conclusion
of the previous proposition, that is, such that
every decreasing definable family 
of closed bounded nonempty subsets of $K$ indexed
by $\Gamma$ has nonempty intersection.
\end{situation}
\begin{proposition}\label{closed-bounded-0.5}
Let $K$ be as in Situation \ref{closed-bounded-h1}, then
every decreasing definable family of closed bounded nonempty subsets of 
$K^n$ indexed by $\Gamma$, has a nonempty intersection.
\end{proposition}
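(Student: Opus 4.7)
The plan is to proceed by induction on $n$, with the base case $n=1$ being exactly the hypothesis of Situation~\ref{closed-bounded-h1}. For the inductive step, suppose the conclusion holds in dimension $n-1$, and consider a decreasing definable family $\{C_r\}_{r\in\Gamma}$ of nonempty closed bounded subsets of $K^n$. Write $K^n=K^{n-1}\times K$ with projections $\pi\colon K^n\to K^{n-1}$ and $\sigma\colon K^n\to K$. First I would apply the $n=1$ case to the family $\{\overline{\sigma(C_r)}\}_{r\in\Gamma}$: each term is a nonempty closed bounded subset of $K$ (closure of a bounded set in a valued field remains bounded), the family is decreasing, and it is definable because closures of definable sets are definable in a valued field. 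This produces some $b\in\bigcap_r\overline{\sigma(C_r)}$.

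Next I would use $b$ to shrink the family in the last coordinate. For each $r\in\Gamma$ put
\[
C''_r \;=\; C_r \cap \bigl(K^{n-1}\times \overline{B}(b,r)\bigr),
\]
where $\overline{B}(b,r)=\{y\in K: v(y-b)\geq r\}$. Because $\overline{B}(b,r)$ is a neighborhood of $b$ (it contains the open set $\{y: v(y-b)>r\}$) and $b\in\overline{\sigma(C_r)}$, each $C''_r$ is nonempty; it is visibly closed and bounded, and the family is decreasing and definable in $r$. Then $\{\overline{\pi(C''_r)}\}_{r\in\Gamma}$ is a decreasing definable family of nonempty closed bounded subsets of $K^{n-1}$, so by the inductive hypothesis there exists $a\in\bigcap_r\overline{\pi(C''_r)}$.

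Finally I would verify that $(a,b)\in\bigcap_r C_r$. Fix $r$; since $C_r$ is closed, it suffices to show that every basic open neighborhood $U\times V$ of $(a,b)$ meets $C_r$. Given such $U,V$, since $V$ contains some open ball $\{y:v(y-b)>\gamma\}$ around $b$, I would choose $s\geq r$ with $s>\gamma$, so that $\overline{B}(b,s)\subseteq V$. Then
\[
C''_s \;\subseteq\; C_s\cap\bigl(K^{n-1}\times V\bigr) \;\subseteq\; C_r\cap\bigl(K^{n-1}\times V\bigr),
\]
and since $a\in\overline{\pi(C''_s)}$, the open set $U$ meets $\pi(C''_s)$, producing a point of $C''_s$, hence of $C_r$, lying in $U\times V$. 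The step requiring the most care is the setup of $C''_r$: it must be simultaneously nonempty, closed, bounded, decreasing, and squeezed toward $b$ in the last coordinate, so that the inductive hypothesis applies on $K^{n-1}$ while the resulting $(a,b)$ can genuinely be shown to lie in every $C_r$. Once this construction is in hand, both applications of the $n=1$ and $n-1$ cases, and the final topological verification, are straightforward.
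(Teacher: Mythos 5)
Your proof is correct and follows essentially the same strategy as the paper: induction on dimension, combined with the diagonal trick of intersecting $C_r$ with a ball of radius $r$ about a point obtained from one projection before applying the hypothesis to the other projection (the paper's $D_{r,r}$ is exactly your $C''_r$ with the roles of the two factors swapped). The only differences are cosmetic — you handle the $K$-factor first and the $K^{n-1}$-factor second, and you spell out the final verification that $(a,b)\in\bigcap_r C_r$, which the paper leaves as "a straightforward verification."
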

\begin{proof}
This is by induction on the dimension of the ambient space $n$. Suppose
it is true for $n$ and that the family $\{C_r\}_r$ lives in $K^n\times K$.
Denote $\pi_1:K^n\times K\to K^n$ the first projection and 
$\pi_2:K^n\times K\to K$ the second projection.
By the induction
hypothesis one has a point $a\in \cap_r\overline{\pi_1(C_r)}$.
Now consider the family $D_{r,s}=\overline{\pi_2(C_r\cap \pi_1^{-1}(B_s(a))}$.
This is a decreasing family of nonempty closed sets so by 
the hypothesis
(applied to $D_{r,r}$)
 we get a point $b$ in the intersection. Now a straightforward verification
shows $(a,b)\in \cap_r(C_r)$.
\end{proof}
\begin{proposition}
Suppose $K$ is as in Situation \ref{closed-bounded-h1}.
Suppose $X\subset K^n$ is closed and bounded and $f:X\to K^m$ is definable
and continuous. Then $f(X)$ is closed and bounded.
\end{proposition}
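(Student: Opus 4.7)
The plan is to use Proposition \ref{closed-bounded-0.5} as a substitute for sequential compactness and argue in the usual topological style: to show boundedness and closedness of $f(X)$ separately, in each case assume the conclusion fails and extract from the failure a decreasing definable $\Gamma$-indexed family of closed bounded nonempty subsets of $X$, whose forced common point gives a contradiction.

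For boundedness, endow $K^m$ with the valuation $v(y)=\min_i v(y_i)$, so boundedness of $f(X)$ means that $v\circ f$ is bounded below on $X$. If not, then for every $r\in\Gamma$ the set
\[
A_r=\{x\in X:v(f(x))\le -r\}=X\cap f^{-1}(\{y\in K^m:v(y)\le -r\})
\]
is nonempty. Since $\{y:v(y)\le -r\}$ is the complement of an open ball and hence closed in $K^m$, continuity of $f$ makes $A_r$ closed in $X$, and closedness of $X$ in $K^n$ makes $A_r$ closed in $K^n$; it is bounded because $A_r\subset X$, and the family is clearly definable and decreasing in $r$. By Proposition \ref{closed-bounded-0.5} there is $x\in\bigcap_r A_r$, forcing $v(f(x))\le -r$ for all $r\in\Gamma$, which is impossible since $v(f(x))\in\Gamma\cup\{\infty\}$.

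For closedness, fix $y\in K^m\setminus f(X)$ and consider the definable family
\[
B_r=\{x\in X:v(f(x)-y)\ge r\}=X\cap f^{-1}(\{z\in K^m:v(z-y)\ge r\}).
\]
Each $B_r$ is closed (preimage of the closed ball of valuative radius $r$ around $y$ under the continuous map $f$, intersected with the closed set $X$), bounded (contained in $X$), and the family is decreasing in $r$. If every $B_r$ were nonempty, Proposition \ref{closed-bounded-0.5} would produce $x\in\bigcap_r B_r$, forcing $v(f(x)-y)=\infty$ and hence $f(x)=y$, contradicting $y\notin f(X)$. Therefore some $B_{r_0}$ is empty, which means the closed ball $\{z:v(z-y)\ge r_0\}$ is disjoint from $f(X)$; since closed balls are open in the non-archimedean topology, this is an open neighborhood of $y$ disjoint from $f(X)$, proving $f(X)$ closed.

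I do not foresee a serious obstacle: the whole argument is the standard ``closed-bounded implies compact implies image compact'' reasoning, and all the technical content has already been absorbed into Proposition \ref{closed-bounded-0.5}. The only minor point to be careful about is handling the codomain $K^m$ with $m>1$, which is painless once one uses the componentwise minimum valuation and recalls that balls in $K^m$ are clopen.
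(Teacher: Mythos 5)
Your proof is correct and follows essentially the same route as the paper: the paper's (very terse) proof uses exactly the families $\{x\in X: v(f(x))<-r\}$ for boundedness and $f^{-1}(B_r(a))$ for $a\in\overline{f(X)}$ for closedness, both fed into Proposition \ref{closed-bounded-0.5}. Your only deviations are cosmetic (non-strict versus strict inequality, and phrasing the closedness step contrapositively by starting from $y\notin f(X)$ rather than $a\in\overline{f(X)}$).
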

\begin{proof}
To prove that it is bounded consider the family $C_r=\{x\in X\mid v(f(x))<-r\}$.
To see that it is closed consider the family $f^{-1}(B_r(a))$ for 
$a\in\overline{f(X)}$.
\end{proof}
\begin{proposition}\label{wallman}
In Situation \ref{closed-bounded-h1}. Assume $f:X\times Y\to K^n$ is 
continuous and definable, for $X\subset K^r$ and  $Y\subset K^s$.
Suppose $L\subset X$ and $S\subset Y$ are closed and bounded 
in $K^r$ and $K^s$ respectively.
Suppose $W$ is a definable open set with $f(L\times S)\subset W$.
Then there are definable open sets $U\subset K^r$ and $V\subset K^s$ such that
$L\subset U$, $S\subset V$ and $f(U\cap X\times V\cap Y)\subset W$.
\end{proposition}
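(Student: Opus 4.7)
The plan is a contradiction argument invoking Proposition \ref{closed-bounded-0.5} twice: first to extract a limit point of witnesses to the failure, then to force that point back into $L \times S$.

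For each $\gamma \in \Gamma$ set
\[
L^{\geq \gamma} = \{ x \in K^r : \exists \ell \in L,\ v(x-\ell) \geq \gamma \},
\]
and define $S^{\geq \gamma}$ analogously. A short ultrametric argument shows $L^{\geq \gamma}$ is a definable clopen subset of $K^r$, bounded because $L$ is, with $\bigcap_{\gamma} L^{\geq \gamma} = L$; similarly for $S^{\geq \gamma}$. Assume for contradiction that no suitable pair $(U, V)$ exists. Applied to the definable open neighborhoods $U = L^{\geq \gamma}$ and $V = S^{\geq \gamma}$, this failure produces, for each $\gamma$, a point in $(L^{\geq \gamma} \cap X) \times (S^{\geq \gamma} \cap Y)$ whose $f$-image lies outside $W$. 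Define
\[
Z_\gamma = \{ (x, y) \in (L^{\geq \gamma} \cap X) \times (S^{\geq \gamma} \cap Y) : f(x, y) \notin W \},
\]
and let $D_\gamma = \overline{Z_\gamma}$ be its closure in $K^{r+s}$. Since $Z_\gamma$ sits inside the closed bounded set $L^{\geq \gamma} \times S^{\geq \gamma}$, so does $D_\gamma$, and thus $\{D_\gamma\}$ is a decreasing definable family of nonempty closed bounded subsets of $K^{r+s}$.

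By Proposition \ref{closed-bounded-0.5}, pick $(x^*, y^*) \in \bigcap_\gamma D_\gamma$. Then $x^* \in L^{\geq \gamma}$ for every $\gamma$, so the family $\{ \ell \in L : v(x^* - \ell) \geq \gamma \}$ is a decreasing definable family of nonempty closed bounded subsets of $K^r$; a second application of Proposition \ref{closed-bounded-0.5} yields a common point $\ell^*$, necessarily equal to $x^*$, so $x^* \in L$. Symmetrically $y^* \in S$, so $(x^*, y^*) \in L \times S \subset X \times Y$ and hence $f(x^*, y^*) \in W$ by hypothesis.

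Continuity of $f$ at $(x^*, y^*)$ now supplies a definable open ball $B$ around $(x^*, y^*)$ in $K^{r+s}$ with $f(B \cap X \times Y) \subset W$. But $(x^*, y^*) \in \overline{Z_{\gamma_0}}$ for any $\gamma_0$, so $B$ meets $Z_{\gamma_0}$, producing a point of $X \times Y$ inside $B$ whose $f$-image lies outside $W$ — contradicting $f(B \cap X \times Y) \subset W$. The main technical point is the need to take a topological closure: since $X$ and $Y$ themselves may not be closed in affine space, the natural bad set $Z_\gamma$ lives inside $X \times Y$, and must be enlarged to a closed subset of $K^{r+s}$ for the compactness principle to apply. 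This uses that closures of definable sets remain definable in this setting, the same fact already implicit in the proof of Proposition \ref{closed-bounded-0.5}.
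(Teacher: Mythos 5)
Your proof is correct and follows essentially the same route as the paper: the paper's (one-line) argument is to apply Proposition \ref{closed-bounded-0.5} to the decreasing family $\overline{f^{-1}(K^n\setminus W)}\cap B_\gamma(L)\times B_\gamma(S)$, which coincides with your $D_\gamma$ since the neighborhoods $L^{\geq\gamma}\times S^{\geq\gamma}$ are clopen. The only stylistic difference is your second invocation of Proposition \ref{closed-bounded-0.5} to place the limit point in $L\times S$, which is harmless but unnecessary, as $x^*\in\bigcap_\gamma L^{\geq\gamma}$ already forces $x^*\in\overline{L}=L$ directly.
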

\begin{proof}
Otherwise we can consider the family
 $\overline{f^{-1}(K^n\setminus W)}\cap B_r(L)\times B_r(S)$ to obtain a contradiction
\end{proof}
\section{The closedness theorem in mixed characteristic}\label{seccommix}
In this section we prove the closedness theorem in the form of 
Proposition \ref{closed-bounded-0} for 1-h-minimal valued fields
of mixed characteristic with algebraic $RV_{p,\bullet}$.
Here we are denoting $RV_{p,\bullet}$ the collection of sorts
$RV_n=K^{\times}/(1+p^n\mathcal{M})$ where $\mathcal{M}$ is the maximal
ideal of the valuation ring of $K$.
 The proof
proceeds by verifying that the conditions in Situation \ref{closed-bounded-0}
apply to $K$ with a characteristic $0$ coarsening of the valuation.
Denote $\mathcal{L}_c$ the given language in $K$ expanded with such a 
coarsening. Then $K_c$ (that is $K$ considered as a valued field with the 
coarse valuation $v_c$) is 1-h-minimal in the language $\mathcal{L}_c$, so 
it satisfies hypothesis 1 and 2.
In this section we verify it also satisfies hypothesis 3 and 4.
The fact that the any residue characteristic $0$ coarsening is 1-h-minimal
is ine of the main theorems of \cite{hensel-minII}, see Theorem 2.2.7.
\begin{proposition}
If $K$ is a 0-h-minimal mixed characteristic field then $RV_{p,\bullet}$ 
is stably 
embedded, in the strong sense of Proposition 2.6.12 of \cite{hensel-min}.
\end{proposition}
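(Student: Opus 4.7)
My plan is to follow the proof of Proposition 2.6.12 of \cite{hensel-min} with $RV_{p,\bullet}$ in place of $RV$, verifying that each step remains valid in the mixed characteristic setting. Concretely, the goal is to show that every $\emptyset$-definable $X\subseteq K^m\times RV_{p,\bullet}^n$ admits a presentation $X=\{(a,x):(x,\chi(a))\in X'\}$ for some $\emptyset$-definable $X'\subseteq RV_{p,\bullet}^{n+n'}$ and some $\emptyset$-definable map $\chi:K^m\to RV_{p,\bullet}^{n'}$.

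First I would reduce to $m=1$ by induction, peeling off one $K$-variable at a time and folding each resulting $RV_{p,\bullet}$-parameter into the $RV_{p,\bullet}$-side. For the base case $m=1$, fix a formula $\varphi(y,x)$ defining $X\subseteq K\times RV_{p,\bullet}^n$ with $y\in K$, and view it as a family of sets $D_x=\{y\in K:\varphi(y,x)\}\subseteq K$ parametrised by $x\in RV_{p,\bullet}^n$. The key step is to apply the mixed-characteristic formulation of $0$-h-minimality from \cite{hensel-minII} to prepare this family uniformly in $x$. This produces a uniform precision $n_0$ and a $\emptyset$-definable finite function $C(x)\subseteq K$ such that membership of $y$ in $D_x$ is decided by the tuple $(rv_{n_0}(y-c))_{c\in C(x)}$. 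Equivalently, there is a $\emptyset$-definable $\psi$ such that
\[
\varphi(y,x)\longleftrightarrow \psi\bigl(x,(rv_{n_0}(y-c))_{c\in C(x)}\bigr).
\]

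The main obstacle is that this encoding still has the auxiliary $RV_{p,\bullet}$-tuple depending on $x$, whereas stable embeddedness requires extracting a $\chi(y)$ independent of $x$. To overcome this, I would mirror the rearrangement step in \cite{hensel-min}: one observes that only finitely many $K$-valued ``schemes'' for $C$ occur as $x$ varies, each given by a $\emptyset$-definable rule, and that the resulting $rv_{n_0}$-values can be computed directly from $y$ using the finitely many $\emptyset$-definable polynomial/parametric ingredients of the formula $\varphi$. Taking $\chi(y)$ to be the full list of these $rv_{n_0}$-values of $\emptyset$-definable polynomial expressions in $y$, and $X'$ the corresponding condition in $RV_{p,\bullet}$-variables, yields the desired factorisation.

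The delicate point is to verify that this rearrangement still goes through when only $RV_{p,\bullet}$ (and not $RV$ alone) is algebraic, which is why the full sort $RV_{p,\bullet}$ must appear in the target. This relies on the preparation and Jacobian properties of $0$-h-minimal mixed characteristic fields as developed in \cite{hensel-minII}, and is essentially the reason why the statement is phrased with $RV_{p,\bullet}$ rather than $RV$. Once these properties are invoked, strong stable embeddedness in the sense of \cite{hensel-min} Proposition 2.6.12 follows by the same formal manipulation as in residue characteristic $0$.
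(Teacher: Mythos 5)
Your proposal takes essentially the same route as the paper, whose proof consists of the single observation that the argument for Proposition 2.6.12 of \cite{hensel-min} goes through verbatim once the residue characteristic $0$ preparation results are replaced by their mixed characteristic counterparts (Lemma 2.3.1 and Proposition 2.3.2 of \cite{hensel-minII}). Your more detailed unwinding of that argument (reduction to one field variable, uniform preparation in the $RV_{p,\bullet}$-parameters, and the rearrangement extracting $\chi(y)$) is consistent with what the paper is invoking.
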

\begin{proof}
The proof of Proposition 2.6.12 in \cite{hensel-min} , 
using Lemma 2.3.1 and Proposition 2.3.2 in 
\cite{hensel-minII}
goes through.
\end{proof}
\begin{proposition}\label{resplendent-rv}
If $K$ is a valued field which is $0$-h-minimal 
then there is a definitionally equivalent language in which there is
 resplendent relative elimination of quantifiers to 
$RV_{p,\bullet}$ or $RV$ according to the residue characteristic.
\end{proposition}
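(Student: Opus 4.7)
The plan is to produce a definitional expansion of the given language in which the relative quantifier elimination implicit in $0$-h-minimality becomes syntactically robust under arbitrary further expansions of the $RV$-structure (resp.\ $RV_{p,\bullet}$-structure). The strategy is the usual ``factor everything through $RV$'' trick: add function symbols $K^n\to RV^\ell$ for enough $\emptyset$-definable functions so that every mixed-sort $\emptyset$-definable set decomposes cleanly.

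First I would combine the preceding proposition with the preparation principles of 0-h-minimality (Lemma 2.3.1 and Proposition 2.3.2 of \cite{hensel-minII}, and the proof of Proposition 2.6.12 of \cite{hensel-min}) to obtain a uniform factorization statement: for every $\emptyset$-definable $X\subset K^n\times RV^m$ there exist finitely many $\emptyset$-definable functions $g_1,\dots,g_k\colon K^n\to RV^{\ell_i}$ and a $\emptyset$-definable set $E$ in a product of $RV$-sorts with
\[
X=\{(\bar x,\bar y):(g_1(\bar x),\dots,g_k(\bar x),\bar y)\in E\}.
\]
Existence of such a factorization for each single $X$ is, in essence, the strong stable embeddedness of $RV$ (resp.\ $RV_{p,\bullet}$) just proved; the preparation axiom is what allows it to be taken uniformly with $\emptyset$-definable $g_i$.

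Second I let $\mathcal{L}'$ denote the language obtained from the original by adjoining a function symbol for each such $\emptyset$-definable $g\colon K^n\to RV^\ell$, together with a relation symbol for every $\emptyset$-definable subset of each product of $RV$-sorts. Since every added symbol is $\emptyset$-definable in the original language, $\mathcal{L}'$ is definitionally equivalent to it. Moreover, by the factorization of step one, every $\emptyset$-definable subset of $K^n\times RV^m$ is now an atomic $\mathcal{L}'$-formula in which the $K$-variables occur only as arguments of the adjoined $g$-symbols, which is exactly relative quantifier elimination to the $RV$-sorts in $\mathcal{L}'$.

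Third, I verify resplendence. Suppose the $RV$-language is expanded by an arbitrary family of new predicates on products of $RV$-sorts. A formula in the expansion is a boolean combination of $\mathcal{L}'$-formulas and formulas of the form $P(\tau_1,\dots,\tau_s)$ for new predicates $P$ and $\mathcal{L}'$-terms $\tau_i$ of $RV$-sort. Because the only $\mathcal{L}'$-function symbols producing $RV$-values from $K$-arguments are the adjoined $g$'s, each $\tau_i$ is of the form $\sigma_i(g_{j_1}(\bar x),\dots,g_{j_r}(\bar x),\bar y)$ for a pure $RV$-term $\sigma_i$; applying the relative QE from step two to the $\mathcal{L}'$-fragments, the whole formula becomes an $RV$-formula, in the expanded $RV$-language, evaluated at terms of the form $g_i(\bar x)$. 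This is the required resplendent form. The main obstacle is step one: ensuring the factorization is uniform and parameter-free, which requires a careful recapitulation of the proofs of \cite{hensel-min} and \cite{hensel-minII} invoked in the preceding proposition, tracking parameters at each stage; once this uniform preparation is in hand, steps two through four are formal bookkeeping.
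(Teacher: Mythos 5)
Your overall strategy --- expand the language by $\emptyset$-definable functions $K^n\to RV^{\ell}$ and by relation symbols for $\emptyset$-definable subsets of products of $RV$-sorts, then push everything into the $RV$-sorts --- is close to the paper's, and your step one is indeed the strong stable embeddedness established in the preceding proposition. The genuine gap is in step three, which is where the actual content of \emph{resplendence} lives. Your sentence ``a formula in the expansion is a boolean combination of $\mathcal{L}'$-formulas and formulas of the form $P(\tau_1,\dots,\tau_s)$'' is only true of \emph{quantifier-free} formulas of the expanded language; a general formula has field quantifiers applied to subformulas that already mention the new predicates $P$, and nothing in your argument eliminates such a quantifier. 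That elimination cannot be done by ``applying the relative QE from step two to the $\mathcal{L}'$-fragments'', because the set $\{x\in K\mid \chi(x,\bar y,\bar z)\}$ for a mixed $\chi$ need not be $\mathcal{L}'$-definable, so step two says nothing about it. What is actually needed is: (i) the $x$-dependence of every new-predicate atom factors through terms $g(x,\bar y)$ valued in $RV$ (true by construction); (ii) the $x$-dependence of the $\mathcal{L}'$-part can also be made to factor through such terms; and (iii) the image of $x\mapsto g(x,\bar y)$ is uniformly an $\mathcal{L}'$-definable subset of $RV^{\ell}$, so that $\exists x$ over the field becomes a quantifier over $RV$. This package is exactly the ``closed family of sorts'' criterion of \cite{rideau-2017}, Appendix A, which the paper invokes after arranging that every $\emptyset$-definable function $K^n\to RV_{p,\bullet}$ is a term; with that citation (or with (i)--(iii) written out), resplendence reduces to plain relative QE and your argument closes.

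Two smaller points of comparison. For the plain QE the paper argues one field quantifier at a time: $\exists x\,\varphi(x,y,z)$ holds iff $rv(\{x\mid\varphi\})\neq\emptyset$, and by strong stable embeddedness this image is $D'_{t(y)z}$ for a term $t$, so the existential becomes an $RV$-condition; your global factorization of $\emptyset$-definable subsets of $K^n\times RV^m$ yields the same conclusion and is fine. Second, in step one you attribute the uniform, parameter-free factorization to ``the preparation axiom''; preparation by itself only produces a finite preparing set $C\subset K$ that is $\emptyset$-definable as a set, not a tuple of $\emptyset$-definable functions into $RV$, and upgrading this is precisely the content of Proposition 2.6.12 of \cite{hensel-min} and of the preceding proposition in mixed characteristic. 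You should lean on that statement directly rather than on preparation.
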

\begin{proof}
We expand the language and assume that every $0$-definable
set in $RV_{p,\bullet}$ is quantifier free definable. Also we may assume 
that every $0$-definable function $f:K^n\to RV_{p,\bullet}$ is a term.
We also assume that $RV_{p,\bullet}$ form a closed family of sorts in the 
terminology of \cite{rideau-2017} Appendix A.
Then by the result of that Appendix we just have to see 
elimination of quantifiers.

It is enough to see that every formula $\exists x\varphi(x,y,z)$ where 
$y$ is a tuple of field variables, $x$ is a field variable and $z$ is an
$RV_{p,\bullet}$ variable; 
is equivalent to a quantifier free formula.
If we denote by $A_{yz}=\{x\mid \vDash \varphi(x,y,z)\}$,
and $D_{yz}=rv(A_{yz})$ then $A_{yz}$ is not empty if and only if $D_{yz}$
is not empty. By the strong stable embededness we conclude that 
$D_{yz}=D'_{t(y)z}$ for some term $t$ 
(Apply strong embededness to $D_y=\cup_z D_{yz}\times \{z\}$).
\end{proof}

\begin{definition}
Suppose $K$ is a valued field of residue characteristic $0$.
In $RV$ we consider the following language. 
$RV$ comes equipped with the group language and 
sorts for $k^{\times}$ and $\Gamma$, and function
symbols for the short exact sequence 
$1\to k^{\times}\to K^{\times}\to \Gamma\to 0$. Additionally $\Gamma$ has the
ordered group language and $k^{\times}$ has a ternary relation symbol 
for addition (wherever it is defined).

We call this language the Basarab language in $RV$.

If $K$ is a valued field of mixed characteristic we consider in 
$RV_{p,\bullet}$ the language that comes with one sort for each $RV_n$ with
the group language, and
sorts for $\mathcal{R}_n=\mathcal{O}^{\times}/(1+p^n\mathcal{M})$
and $\Gamma$, function symbols for 
$1\to \mathcal{R}_n\to RV_n\to \Gamma\to 0$, 
and the order in $\Gamma$.
Additionally we have a function symbol for the projection $RV_{n+1}\to RV_n$.
Finally, in each $RV_n$ we have a ternary relation symbol 
$\oplus$ and a binary relation symbol $\ominus$,
defined as $\oplus(x,y,z)$ if one has 
$rv_n(x')=x, rv_n(y')=y$, $rv_n(z')=z$
and $z'=x'+y'$, for some $x',y',z'$; and $\ominus(x,y)$ if there is and
$x'\in K$ with $rv_n(x')=x$ and $rv(-x')=y$.

We say the field $K$ has algebraic $RV_{p,\bullet}$ if every 
$0$-definable subset
of $RV_{n}^m$ in $K$ is definable in $RV_{p,\bullet}$ in the Basarab language.
\end{definition}
Every henselian mixed characteristic valued field $K$ with the pure
valued field language has algebraic $RV_{p,\bullet}$, see for example
\cite{flenner-2011}. Also every henselian valued field in 
mixed characteristic with an analytic expansion also has algebraic 
$RV_{p,\bullet}$, see for instance Theorem 6.3.7 of \cite{CluLip}.
\begin{lemma}\label{rv0}
Suppose $K$ is a valued field of residue characteristic $0$.
Consider in $RV$ the Basarab language $\mathcal{L}_b$.
In $RV$ denote the relations $\oplus$ and $\ominus$ defined as
$\oplus(x,y,z)$ if $rv(x')=x,rv(y')=y, rv(z')=z$ for some $x,y,z$ with 
$x'+y'=z'$ and $\ominus(x,y)$ if $rv(x')=x, rv(-x')=y$ for some 
$x'$. Then $\oplus$ and $\ominus$ definable in $\mathcal{L}_b$.
\end{lemma}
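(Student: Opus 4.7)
The plan is to express each of $\oplus$ and $\ominus$ as a Boolean combination of atomic formulas in the Basarab language $\mathcal{L}_b$, exploiting the group structure on $RV$, the inclusion $\iota\colon k^{\times}\hookrightarrow RV$, the valuation map $v\colon RV\to\Gamma$, and the ternary relation $R$ on $k^{\times}$ that encodes addition whenever the sum is nonzero.

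First I would handle $\ominus$. Since the residue characteristic is $0$, $\mathrm{char}(k)=0$, so $-1\in k^{\times}$ is well defined and distinct from $1$. It is $\emptyset$-definable in $\mathcal{L}_b$ as the unique $u\in k^{\times}$ for which no $w\in k^{\times}$ satisfies $R(1,u,w)$; indeed this holds exactly when $1+u=0$. For every $x'\in K^{\times}$ we have $-x'=(-1)\cdot x'$, hence $\rv(-x')=\iota(-1)\cdot \rv(x')$. Therefore $\ominus(x,y)$ is equivalent to $y=\iota(-1)\cdot x$, which is quantifier free in $\mathcal{L}_b$.

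For $\oplus$ I would split into cases on $v(x)$ and $v(y)$. Given $x',y'\in K^{\times}$ with $\rv(x')=x$ and $\rv(y')=y$: if $v(x)<v(y)$ then $x'+y'=x'(1+y'/x')$ with $y'/x'\in\CM$, so $\rv(x'+y')=x$, and symmetrically if $v(x)>v(y)$. If $v(x)=v(y)$, then $\lambda:=y\cdot x^{-1}\in RV$ satisfies $v(\lambda)=0$, so $\lambda=\iota(\mu)$ for a unique $\mu\in k^{\times}$. When $\mu\neq -1$, the sum $1+\mu$ lies in $k^{\times}$ and $\rv(x'+y')=\iota(1+\mu)\cdot x$, which is expressed by $v(z)=v(x)$ together with $R(1,\mu,z\cdot x^{-1})$ in $k^{\times}$. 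When $\mu=-1$ the value $\rv(x'+y')$ can be any $z$ with $v(z)>v(x)$: given such a $z$, pick preimages $x'$ of $x$ and $z'$ of $z$ in $K^{\times}$ and set $y':=z'-x'$; since $v(z'/x')>0$, we have $\rv(y')=\rv(-x'(1-z'/x'))=\iota(-1)\cdot x=y$, and $\rv(x'+y')=\rv(z')=z$, as required.

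Assembling these cases produces a quantifier free $\mathcal{L}_b$-formula defining $\oplus$. The only nontrivial step is the realization argument in the last case, that every $z$ with $v(z)>v(x)$ actually occurs as $\rv(x'+y')$ for suitable preimages; the remaining cases are purely computational once one writes $x'+y'=x'(1+y'/x')$.
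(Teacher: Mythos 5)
Your argument is correct and follows essentially the same route as the paper: the same case split on $v(x)$ versus $v(y)$, the same reduction to residue-field addition via $x'+y'=x'(1+y'/x')$ when the valuations agree and $yx^{-1}\neq\iota(-1)$, and the same observation (with the realization argument you supply) that in the antipodal case $\rv(x'+y')$ ranges over exactly the $z$ with $v(z)>v(x)$. The only quibble is cosmetic: the resulting $\mathcal{L}_b$-formulas are not literally quantifier free (defining $-1\in k^{\times}$ and extracting $\mu$ with $\iota(\mu)=yx^{-1}$ use bounded quantifiers over $k^{\times}$), but the lemma only asserts definability, so this does not affect the proof.
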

\begin{proof}
Recall that the relation $\oplus^r(x,y,z)$ defined as 
$x,y,z\in k^{\times}$ and $z=x+y$ is definable in the Basarab language.
From this we get that $\ominus^r(x,y)$ defined as $y=-x, x,y\in k^{\times}$
is also definable by the condition 
$x,y\in k^{\times}\land \lnot(\exists z)\oplus^r(x,y,z)$.

To define $\oplus$ we consider cases.
If $v(x)<v(y)$ then $\oplus(x,y,z)$ is equivalent to $z=x$.
Similarly if $v(y)<v(x)$, $\oplus(x,y,z)$ is equivalent to $z=y$.
If $v(x)=v(y)$, we get $\oplus(x,y,z)$ is equivalent to 
$\oplus(1,yx^{-1},zx^{-1})$ so assume $x=1$ and $v(y)=0$.
Now if $y\neq -1$ we have that $\oplus(1,y,z)$ is equivalent to 
$\oplus^r(1,y,z)$.
Finally $\oplus(1,-1,z)$ is equivalent to $v(z)>0$.

The analysis for $\ominus(x,y)$ is similar. Namely,
$\ominus(x,y)$ implies $v(x)=v(y)$, and if $v(x)=v(y)$ it is equivalent to
$\ominus(1,yx^{-1})$, which is equivalent to $\ominus^r(1,yx^{-1})$.
\end{proof}
\begin{lemma}\label{orthogonal-p}
Let $K$ be a valued field of mixed characteristic in a language $\mathcal{L}$.
Suppose we have a non-trivial residue characteristic $0$ coarsening $v_c$.
In $RV_c$ denote $\mathcal{L}_{be}$ the Basarab language expanded by 
the maps $RV_c\to RV_{p,\bullet}$, and the Basarab language in $RV_{p,\bullet}$.
Then $k_c$ and $\Gamma_c$ are orthogonal, that is, every $0$-definable
subset of $k_c^m\times \Gamma_c^n$ is a boolean combination of 
cartesian products of $0$-definable sets in $\Gamma_c^n$ and $k_c^m$.

Also every $0$-definable subset of $\Gamma_c^n$ is $0$-definable in the 
ordered group language.
\end{lemma}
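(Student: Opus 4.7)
The plan is to reduce the problem, via resplendent relative quantifier elimination to $RV_c$, to an analysis inside the $RV_c$-sort with the expanded Basarab structure. First note that $(K,v_c)$ in language $\mathcal{L}_c$ is $0$-h-minimal of residue characteristic $0$ (by Theorem~2.2.7 of \cite{hensel-minII} together with the fact that $1$-h-minimality implies $0$-h-minimality), so Proposition~\ref{resplendent-rv} furnishes a definitionally equivalent language with resplendent relative QE to $RV_c$. The expansion to $\mathcal{L}_{be}$---adding Basarab on $RV_c$, the maps $RV_c\to RV_n$, and Basarab on $RV_{p,\bullet}$---takes place entirely on the $RV$-sort side, so by resplendency the relative QE to $RV_c$ is preserved. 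Consequently every $\emptyset$-definable subset of $k_c^m\times \Gamma_c^n$ is already $\emptyset$-definable in the $RV_c$-structure with $\mathcal{L}_{be}$.

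For the first (orthogonality) assertion I would apply Lemma~\ref{orthogonal}, partitioning the sorts of the $RV_c$-structure as $\mathcal{A}=\{k_c^{\times},RV_c,RV_n,\mathcal{R}_n\}$ and $\mathcal{B}=\{\Gamma_c,\Gamma\}$. The hypothesis of the lemma requires each basic predicate crossing the partition to be a boolean combination of cartesian products; the candidates are the projections $v_c\colon RV_c\to \Gamma_c$, $v\colon RV_n\to \Gamma$, and the induced quotient $\Gamma\to \Gamma_c$. Following the Denef--Pas style already used earlier in the paper, I would pass to a saturated model and choose group-theoretic cross sections of the short exact sequences $1\to k_c^{\times}\to RV_c^{\times}\to \Gamma_c\to 0$ and $1\to \mathcal{R}_n\to RV_n^{\times}\to \Gamma\to 0$. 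These sections give identifications $RV_c^{\times}\cong k_c^{\times}\times \Gamma_c$ and $RV_n^{\times}\cong \mathcal{R}_n\times \Gamma$ under which the projections become coordinate projections (whose graphs are cartesian), the maps $RV_c\to RV_n$ restrict separately to $k_c^{\times}\to \mathcal{R}_n$ on units and to $\Gamma\to \Gamma_c$ on value groups, and no atomic predicate of $\mathcal{L}_{be}$ directly couples $k_c^{\times}$ and $\Gamma_c$.

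For the second assertion, by the reduction a $\emptyset$-definable subset of $\Gamma_c^n$ is defined in the $RV_c$-structure. Quantifiers over $k_c^{\times}$ or $\mathcal{R}_n$ contribute nothing to the $\Gamma_c$-part by the orthogonality just established, while quantifiers over $RV_c$ or $RV_n$ decompose along the short exact sequences into $k$- and $\Gamma$-components. What remains is a formula in the pair $(\Gamma,\Gamma_c)$ with the ordered group structures and the quotient $\Gamma\to \Gamma_c$ by the convex subgroup $\Delta$. Using the relative quantifier elimination of \cite{CluHal} (Proposition~1.11) for ordered abelian groups, and the observation that the spine of $\Gamma_c$ is a quotient of that of $\Gamma$, the formula becomes equivalent to one in the pure ordered group language of $\Gamma_c$. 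The most delicate step is the cross-section argument in the second paragraph: while the Denef--Pas angular component makes this transparent in the pure residue-characteristic-$0$ case, here one must check that the added map $RV_c\to RV_n$ and the Basarab structure on $RV_{p,\bullet}$ introduce no new mixed atomic predicate coupling $k_c^{\times}$ to $\Gamma_c$ beyond what the sections handle.
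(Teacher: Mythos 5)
There is a genuine gap, and it sits exactly where you flag the ``most delicate step.'' First, a scoping issue: the lemma is a statement about the structure $RV_c$ in the fixed language $\mathcal{L}_{be}$ only; it assumes nothing about h-minimality, and the reduction from $\mathcal{L}$-definability via resplendent relative quantifier elimination is not part of this lemma (that reduction is carried out later, in Proposition \ref{p-to-0-closed-bounded}). More seriously, your proposed application of Lemma \ref{orthogonal} with $\mathcal{A}=\{k_c^{\times},RV_c,RV_n,\mathcal{R}_n\}$ and $\mathcal{B}=\{\Gamma_c,\Gamma\}$ cannot work: the remark immediately following the lemma's statement records that $\mathcal{R}_n$ and $\Gamma$ are \emph{not} orthogonal in the Basarab language when ramification is infinite (the map $w(x)=v(1-x)$ surjects $\mathcal{R}_1\setminus\{1\}$ onto $[0,v(p))$), so the relation $\oplus$ on $RV_n$ is not a boolean combination of rectangles across your partition, with or without cross sections. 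The cross-section claim itself also fails: the value ``group'' of $RV_n$ is $\Gamma$ while that of $RV_c$ is $\Gamma_c$, so the induced map on value groups goes $\Gamma\to\Gamma_c$, not the direction needed for a product decomposition; and writing $RV_c\cong k_c^{\times}\times\Gamma_c$ via a section $s$, the $\Gamma$-component of the image of $(u,\gamma_c)$ in $RV_n$ is $v(u)+v(s(\gamma_c))$, which depends on $u$ through $v(u)\in\ker(\Gamma\to\Gamma_c)$. Hence the graph of $RV_c\to RV_n$ genuinely couples $k_c^{\times}$ to $\Gamma$, and the hypothesis of Lemma \ref{orthogonal} fails for your partition.

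The paper's proof avoids this by never placing $\Gamma$, $RV_n$ or $\mathcal{R}_n$ on either side of the partition. Instead it works in an auxiliary language $\mathcal{L}_1$ on $RV_c$ alone, consisting of the Basarab language plus predicates for $\mathcal{O}^{\times}/(1+\mathcal{M}_c)$ and $(1+p^n\mathcal{M})/(1+\mathcal{M}_c)$ and the order on the quotient $k_c^{\times}/(\mathcal{O}^{\times}/(1+\mathcal{M}_c))$ --- all of which live on the $k_c^{\times}$ side. The decisive observation is that $\ker(\Gamma\to\Gamma_c)$ is (a quotient of) a subgroup of $k_c^{\times}$, the order on $\Gamma$ decomposes lexicographically into the order on $\Gamma_c$ and the order on that kernel, and $\oplus,\ominus$ on $RV_n$ are induced from $RV_c$, where Lemma \ref{rv0} makes them Basarab-definable. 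One then interprets all of $RV_{p,\bullet}$ in $\mathcal{L}_1$ as quotients and applies Lemma \ref{orthogonal} only to the clean two-block partition $\{k_c^{\times}\}$ versus $\{\Gamma_c\}$ after choosing a single section of $RV_c\to\Gamma_c$. Once this is done, the second assertion (definable subsets of $\Gamma_c^n$ are definable in the ordered group language) falls out of the same application of Lemma \ref{orthogonal}; the detour through the Cluckers--Halupczok spine machinery in your last paragraph is neither needed nor set up correctly, since the obstruction is the coupling of sorts rather than quantifier elimination in a single ordered abelian group.
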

Note that $\mathcal{R}_n$ and $\Gamma$ are
are not orthogonal in the Basarab language when the field $K$ is henselian of
 infinite 
ramification. This is because $\mathcal{R}_1$ is a definable quotient of
$\mathcal{R}_n$ and there is a surjective function 
$\mathcal{R}_1\setminus\{1\}\to [0,v(p))$ in the valued field language
(and so in the Basarab language)
given by $w(x)=v(1-x)$.
\begin{proof}
We consider a diferent language in $RV_c$, denoted $\mathcal{L}_1$  
consisting of the Basarab language
in $RV_c$ together with
a predicate for $\mathcal{O}^{\times}/(1+\mathcal{M}_c)\subset k_c^{\times}$, 
and predicates for
$(1+p^n\mathcal{M})/(1+\mathcal{M}_c)$, and the ordering in the quotient $k_c^{\times}/(O^{\times}/(1+\mathcal{M}_c)\subset \Gamma$.

In this language we may consider
the expansion of the language by a group section of
$RV_c\to \Gamma_c$. Using this section we can identify 
$RV_c$ with $k_c^{\times}\times \Gamma_c$. An application of Lemma \ref{orthogonal}, 
shows that $k_c$ and $\Gamma_c$ are orthogonal;
and $0$-definable
subsets of $\Gamma_c^n$ 
are $0$-definable in the ordered group language of $\Gamma_c$.

So we just need to prove that $RV_{p,\bullet}$ is interpretable in this 
language. 
As a set, $RV_n,k^{\times}$ and $\Gamma$ are interpretable in 
$\mathcal{L}_1$ by the quotients
 $RV_n=RV_c/((1+p^n\mathcal{M})/(1+\mathcal{M}_c))$, 
$k^{\times}=\mathcal{O}^{\times}/(1+\mathcal{M}_c)/((1+p^n\mathcal{M})/(1+\mathcal{M}_c))$, and 
$\Gamma=RV_c/(\mathcal{O}^{\times}/(1+\mathcal{M}_c))$ 

Also, from Lemma \ref{rv0}, we get that $\oplus$ and $\ominus$ in $RV_n$
are interpretable in $\mathcal{L}_1$. Finally note that if 
$I$ denotes the image of $k_c^{\times}$ in $\Gamma$ and 
$p:\Gamma\to \Gamma_c$ is the projection, then 
for $r\in \Gamma$ one has $0<r$ if and only if either $0<p(r)$, or 
$r\in I$ and $0<r$.
As the orders in $\Gamma_c$ and $I$ are definable in $\mathcal{L}_1$
we conclude that the order in $\Gamma$ is also definable.
\end{proof}
\begin{proposition}\label{p-to-0-closed-bounded}
Suppose $K$ is 1-h-minimal valued field of mixed characteristic
in some language $\mathcal{L}$. Suppose $K$ has algebraic $RV_{p,\bullet}$.

Suppose $K$ comes with a non-trivial residue characteristic $0$ coarsening
and let $\mathcal{L}_c$ be 
the extension of the language $\mathcal{L}$ obtained
by adding this.

Then $K_c$ in the language $\mathcal{L}_c$ satisfies conditions 3 and 4 in
Situation \ref{closed-bounded-0}.
\end{proposition}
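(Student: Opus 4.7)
The plan is to reduce the statement to Lemma \ref{orthogonal-p} by means of Proposition \ref{resplendent-rv}. Apply Proposition \ref{resplendent-rv} to $K$ (which is 1-h-minimal in $\mathcal{L}$): after passing to a definitionally equivalent expansion, $K$ admits resplendent relative quantifier elimination to $RV_{p,\bullet}$. The coarsening $v_c$ is determined by the convex subgroup $\Delta=\ker(\Gamma\to\Gamma_c)$ of $\Gamma$, and $\Gamma$ is a sort of $RV_{p,\bullet}$. Hence forming $\mathcal{L}_c=\mathcal{L}\cup\{v_c\}$ amounts to expanding the $RV_{p,\bullet}$-structure by the unary predicate $\Delta$, and by resplendence $K$ in $\mathcal{L}_c$ still enjoys relative QE to $RV_{p,\bullet}$.

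Combined with algebraic $RV_{p,\bullet}$, this tells us that every $\mathcal{L}_c$-definable subset of $RV_{p,\bullet}\cup RV_c$ is definable in the Basarab language on $RV_{p,\bullet}$ augmented by $\Delta$. From $\Delta$ one recovers $RV_c=K^\times/(1+\mathcal{M}_c)$ together with its quotient maps to the $RV_n$, and Lemma \ref{rv0} furnishes the residue-characteristic-zero relations $\oplus,\ominus$ on $RV_c$. Altogether, the induced structure is contained in the language $\mathcal{L}_{be}$ of Lemma \ref{orthogonal-p}. In particular, every $\mathcal{L}_c$-definable subset of $k_c^m\times\Gamma_c^n$ (or of $\Gamma_c^n$) is $\mathcal{L}_{be}$-definable, with parameters coming from the $rv_n$-images in $RV_{p,\bullet}$ of the original $K$-parameters.

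Lemma \ref{orthogonal-p} then yields conditions 3 and 4, modulo the passage from $\emptyset$-definability to definability with parameters. This passage is harmless: the proof of Lemma \ref{orthogonal-p} fixes a section of $RV_c\to\Gamma_c$ to present $RV_c$ as $k_c^\times\times\Gamma_c$, and invokes Lemma \ref{orthogonal} via the absence of basic $\mathcal{L}_{be}$-relations coupling the $k_c$-side and the $\Gamma_c$-side. Adjoining parameters from $RV_{p,\bullet}\cup RV_c$ as constants preserves this absence of coupling, since each parameter decomposes via the section into a $k_c$-component and a $\Gamma_c$-component, and similarly the pure ordered-group description of $\Gamma_c^n$-definable sets survives adjoining constants from $\Gamma_c$.

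The main obstacle is the opening step: confirming that the coarsening really is an $RV_{p,\bullet}$-structure expansion, so that resplendence of Proposition \ref{resplendent-rv} applies. This is fine because $\Delta$ is simply a subset of the sort $\Gamma\subset RV_{p,\bullet}$, and $v_c:K\to\Gamma_c=\Gamma/\Delta$ is then definable from $v$ and $\Delta$ in the quotient language. Once this identification is accepted, the argument is a direct chaining of Proposition \ref{resplendent-rv}, Lemma \ref{rv0}, and Lemma \ref{orthogonal-p}.
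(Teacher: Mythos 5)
There is a genuine gap at the central step. You claim that relative quantifier elimination to $RV_{p,\bullet}$ (made resplendent and then applied to the expansion by the convex subgroup $\Delta$) controls the $\mathcal{L}_c$-definable subsets of $RV_{p,\bullet}\cup RV_c$, and in particular that ``from $\Delta$ one recovers $RV_c$ together with its quotient maps to the $RV_n$.'' This is the wrong direction: since $1+\mathcal{M}_c\subset 1+p^n\mathcal{M}$ for every $n$, the sort $RV_c=K^{\times}/(1+\mathcal{M}_c)$ \emph{surjects onto} each $RV_n$, so it is a strictly finer quotient of $K^{\times}$ and is not interpretable in $RV_{p,\bullet}$ augmented by $\Delta$ (at best it embeds in the inverse limit of the $RV_n$, which is not an interpretable object). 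The introduction of the paper flags exactly this: the coarsened $RV$ is not algebraic, which is why the axiomatic Situation \ref{closed-bounded-h0} is needed. Condition 3 concerns $k_c^m\times\Gamma_c^n$, and $k_c^{\times}=\mathcal{O}_c^{\times}/(1+\mathcal{M}_c)$ lives inside $RV_c$, not inside $RV_{p,\bullet}$; so your argument never actually gets control of the induced structure on $k_c$, and Lemma \ref{orthogonal-p} cannot yet be invoked.

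The paper bridges this gap with a substantive extra argument that your proposal omits: since $v$ remains $0$-h-minimal in the expanded language (the coarsening being an $RV_{p,\bullet}$-expansion, as you correctly observe), one can prepare the fibers $rv_c^{-1}(X_z)$ of a definable $X\subset RV_c\times RV_{p,\bullet}^r$ by finitely many points $a_1,\dots,a_n$ and an integer $m$, writing $X_z=R^{-1}(\pi_m^{-1}(Y_z))$ with $R$ built from $rv_c$ and translations and $Y_z\subset RV_m^n$; the family $\{Y_z\}_z$ is then handled by resplendent relative QE to $RV_{p,\bullet}$, and a second resplendent QE, relative to $RV_c$ in the valued field language of $v_c$, shows that $X$ is definable in the combined Basarab language $\mathcal{L}_{be}^0$ on $RV_c$ and $RV_{p,\bullet}$. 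Only after this reduction does Lemma \ref{orthogonal-p} apply. Your closing paragraph about parameters is a reasonable secondary concern, but the missing preparation step is where the actual content of the proposition lies, and without it the proof does not go through.
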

\begin{proof}
We can add some constants to the Basarab language in $RV_{p,\bullet}$ and assume
that every set definable without parameters in $RV_{p,\bullet}$ in the
language $\mathcal{L}$ is definable without parametes in the $RV_{p,\bullet}$. 

 Denote $\mathcal{L}_e$ the extension of $K$ to the
coarsening and constants as before.
Denote $\mathcal{L}_{be}^0$ 
the Basarab language in $RV_c$ together with the maps
$RV_c\to RV_n$, the Basarab language in $RV_{p,\bullet}$, 
and some constants as in the first paragraph. 
We denote $\mathcal{L}_{be}^p$ the Basarab language with constants in 
$RV_{p,\bullet}$, together with
 a sort for $\Gamma_c$ and the function symbol for
the projection
$\Gamma\to \Gamma_c$.
Let $\mathcal{L}_{ve}$ be the valued field language of $v$ expanded by 
$v_c$ and the constants in $RV_{p,\bullet}$.

Note that 
$K_c$ is 1-h-minimal with the language $\mathcal{L}_e$, by Theorem 
2.2.7 of \cite{hensel-minII}.

We prove first that every  $X\subset RV_c\times RV_{p,\bullet}^r$ which is 
$\mathcal{L}_e$ definable, is definabe in $\mathcal{L}_{be}^0$.
Indeed, note that 
$v$ is 0-h-minimal in $\mathcal{L}_e$, because it is an $RV_{p,\bullet}$ 
expansion
(see Lemma 2.6.3 of \cite{hensel-minII}).
So there are $a_1,\dots,a_n\in K$ and $m$ an integer
which prepare the family $rv_c^{-1}(X_z)$ uniformly in $z\in RV_{p,\bullet}^r$,
see Proposition 2.3.2 of \cite{hensel-minII}.

So if $R=\{(rv_c(x),rv_c(x-a_1),\dots,rv_c(x-a_n)\}\subset RV_c\times RV_c^n$ 
then $X_z=R^{-1}(\pi_m^{-1}(Y_z))$ for an $\mathcal{L}_e$-definable family
$Y_z\subset RV_m^n$. Here we denote $\pi_m:RV_c^n\to RV_m^n$ the projection. 
By resplendent relative quantifier elimination
$\{Y_z\}_z$ is 
$\mathcal{L}_{be}^p$-definable, see Proposition \ref{resplendent-rv}. 
We conclude $X$ is $\mathcal{L}_{ve}$-definable, 
and by resplendent quantifier elimination relative to $RV_c$ 
(in the valued field language of $v_c$), 
$X$ is $\mathcal{L}_{be}^0$-definable.

Now we 
show that in $\mathcal{L}_e$ $\Gamma_c,k_c$ satisfy property 3 of 
Situation \ref{closed-bounded-h0}.
As $k_c\subset RV_c$ and $\Gamma_c$ is 
interpretable in $RV_{p,\bullet}$, by what we proved
before we see that it is
enough to see that $k_c$ and $\Gamma_c$ are orthogonal in the language 
$\mathcal{L}_{be}^0$. This follows from Lemma \ref{orthogonal-p}. 

We still have to see property 4 of Situation \ref{closed-bounded-h0}. 
Let  $X\subset \Gamma_c^n$ be
definable in $\mathcal{L}_e$.
We have to see it is definable in the ordered group language in
$\Gamma_c^n$. By resplendent relative 
quantifier elimination it is definable in 
$\mathcal{L}_{be}^p$ in $RV_{p,\bullet}$, and so a fortiori it is definable
in $\mathcal{L}_{be}^0$.
By Lemma \ref{orthogonal-p}
we finish.
\end{proof}

Question: Suppose $K$ is a 0-h-minimal field in mixed characteristic
with a language $\mathcal{L}$
that extends the valued field language,
and expand the language to $\mathcal{L}_c$
to include a non-trivial residue characteristic $0$ coarsening
$v_c$.
Does it follow that every $0$-definable set $X\subset RV_c^n$ is definable 
in the expansion of the Basarab language in $RV_c$ described in 
Lemma \ref{orthogonal-p} and the  
$0$-$\mathcal{L}$-definable sets
in $RV_{p,\bullet}$?. In the course of the above proof we see this for $n=1$.

\begin{proposition}\label{closed-bounded-p}
Suppose $K$ is a 1-h-minimal valued field of mixed characteristic
with algebraic $RV_{p,\bullet}$.
Then $K$ satisfies the hypothesis in Situation \ref{closed-bounded-h1}.
\end{proposition}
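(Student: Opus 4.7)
The plan is to apply Proposition~\ref{closed-bounded-0} to a residue-characteristic-zero coarsening $K_c$ of $K$, then reindex the given $\Gamma$-family as a $\Gamma_c$-family and use that the $v$- and $v_c$-topologies on $K$ agree. First I would fix a nontrivial residue-characteristic-zero coarsening $v_c$ of $v$ with convex subgroup $H \subseteq \Gamma$ and quotient $\Gamma_c = \Gamma/H$; such a coarsening always exists in a sufficiently saturated elementary extension, since the convex subgroup generated by $v(p)$ becomes proper after adding nonstandard elements, and passing to such an extension is harmless because the conclusion sought for any fixed family is a first-order sentence in that family's parameters. By Proposition~\ref{p-to-0-closed-bounded} together with the 1-h-minimality of $K_c$ in $\mathcal{L}_c$ (Theorem 2.2.7 of \cite{hensel-minII}), $K_c$ satisfies all four conditions of Situation~\ref{closed-bounded-h0}, and Proposition~\ref{closed-bounded-0} applied to $K_c$ yields that every decreasing definable family of $v_c$-closed $v_c$-bounded nonempty subsets of $K$ indexed by $\Gamma_c$ has nonempty intersection.

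Next I would check that the $v$- and $v_c$-topologies on $K$ coincide: $B^{v_c}_{\pi(\gamma)}(a) \subseteq B^v_\gamma(a)$ because $v_c(x-a) > \pi(\gamma)$ forces $v(x-a) > \gamma + h$ for every $h \in H$ and in particular $v(x-a) > \gamma$, and conversely $B^v_{\gamma'}(a) \subseteq B^{v_c}_{\gamma_c}(a)$ whenever $\pi(\gamma') > \gamma_c$ (such $\gamma'$ exists because $\Gamma_c$ is unbounded above). Hence the neighborhood filter at each point is the same in both topologies, so ``$v$-closed and $v$-bounded'' coincides with ``$v_c$-closed and $v_c$-bounded''. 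Given a decreasing definable family $\{C_r\}_{r \in \Gamma}$ of $v$-closed $v$-bounded nonempty subsets, I would reindex by $\Gamma_c$ via
\[
\tilde C_{r_c} := \bigcap_{r \in \pi^{-1}(r_c)} C_r = \{x \in K : \forall r \in \Gamma\,(\pi(r) = r_c \to x \in C_r)\},
\]
which is first-order definable in $\mathcal{L}_c$, decreasing in $r_c$ (any element of a higher coset of $H$ exceeds every element of a lower coset, so the decreasingness of the original family transfers), and consists of $v_c$-closed $v_c$-bounded sets. Applying the $\Gamma_c$-indexed conclusion just obtained to $\{\tilde C_{r_c}\}_{r_c}$ then produces a point of $\bigcap_{r_c}\tilde C_{r_c} = \bigcap_r C_r$.

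The hard part is showing each $\tilde C_{r_c}$ is nonempty. The partial type $\{C_r(x) : r \in \pi^{-1}(r_c)\}$ is finitely consistent by monotonicity of the original family, but its universal quantifier ranges over a possibly large coset, so the $\omega$-saturation assumed in the paper need not be enough; however, since the closed-bounded conclusion sought is first-order in the parameters defining the given family, one may pass to a sufficiently saturated elementary extension, realize the required types there, and transfer the resulting intersection conclusion back to the original model.
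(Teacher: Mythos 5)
Your overall route is exactly the paper's: pass to a nontrivial residue characteristic $0$ coarsening, invoke Proposition \ref{p-to-0-closed-bounded} so that Proposition \ref{closed-bounded-0} applies to $K_c$, reindex the family over the fibers of $\pi:\Gamma\to\Gamma_c$, and use that the $v$- and $v_c$-topologies on $K$ coincide. The gap is in what you single out as the hard part, the nonemptiness of $\tilde C_{r_c}$. The saturation argument you sketch does not work: realizing the partial type $\{x\in C_r : \pi(r)=r_c\}$ means realizing a type over a cofinal subset of a coset of the convex subgroup $H=\ker\pi$, and in any $\kappa$-saturated model every definable linear order without a maximum has cofinality at least $\kappa$, so the saturation required always outstrips the saturation available -- this is precisely why one cannot prove the proposition itself by naive compactness. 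Nor does realizing the type in a further extension $K^{**}$ help, since $\tilde C_{r_c}$ recomputed in $K^{**}$ involves a universal quantifier over the larger fiber $\pi^{-1}(r_c)\subseteq\Gamma^{**}$, so a witness for the type over the old fiber need not lie in it; the argument becomes circular.

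The step is in fact trivial, and this is the one line the paper uses: the quotient order on $\Gamma_c=\Gamma/H$ satisfies that $\pi(r)>\pi(r')$ implies $r>r'$ in $\Gamma$. Hence for any single $r$ with $\pi(r)>r_c$ (such $r$ exists since the coarsening is nontrivial) one has $C_r\subseteq C_{r'}$ for every $r'$ in the fiber $\pi^{-1}(r_c)$, and therefore $\emptyset\neq C_r\subseteq \tilde C_{r_c}$. No saturation enters. With this substitution your proof coincides with the paper's; your remarks on the existence of the coarsening and on the agreement of the topologies are correct (and the paper's standing $\omega$-saturation assumption already yields an element of $\Gamma$ above all multiples of $v(p)$, hence a nontrivial residue characteristic $0$ coarsening, so no auxiliary extension is needed there either).
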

\begin{proof}
Let $K_c$ be 
an expansion of $K$ obtained by adding a non-trivial residue
characteristic $0$ coarsening to $K$. 
By Proposition \ref{p-to-0-closed-bounded}
$K_c$ satisfies the hypothesis in Proposition \ref{closed-bounded-0}.
If $\{C_r\}_{r\in\Gamma}$ is a decreasing $\mathcal{L}$-definable family
of closed bounded nonempty sets in $K$, and $p:\Gamma\to \Gamma_c$ is the 
projection, then
the family $D_s=\cap\{C_r\mid p(r)=s\}$ is definable in $\mathcal{L}_c$,
it is closed in $K_c$ because the topologies coincide, it is bounded
for the same reason, and it is non-empty because for $r\in \Gamma$ with
$p(r)>s$ one has $C_r\subset D_s$. We conclude that $D_s$ and so $C_r$ have
a non-empty intersection.
\end{proof}
As a consequence of Propositions \ref{closed-bounded-0}, \ref{closed-bounded-p}, and \ref{closed-bounded-0.5} we have the main result of this document.
\begin{proposition}\label{main-compactness}
Suppose $K$ is a 1-h-minimal field of residue characteristic $0$
with algebraic $RV$,
or a 1-h-minimal field of mixed characteristic $(0,p)$ with algebraic
$RV_{p,\bullet}$. 
Then every decreasing definable family $\{C_r\}_{r\in\Gamma}$
of closed bounded sets in $K^n$, has a nonempty intersection.
\end{proposition}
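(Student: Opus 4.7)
The plan is to split the proof by residue characteristic, verify the one-dimensional intersection property of Situation \ref{closed-bounded-h1} in each regime, and then lift to $K^n$ via Proposition \ref{closed-bounded-0.5}. Both cases rest on machinery already assembled earlier; the proof is essentially an assembly of what has been built.

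For the residue characteristic $0$ case with algebraic $RV$, the plan is to verify the four hypotheses of Situation \ref{closed-bounded-h0}. Conditions 1 (definable spherical completeness) and 2 (opacity of $rv$) follow from $0$-h-minimality by the preparation argument sketched immediately after the statement of that situation. Conditions 3 and 4 use the algebraic $RV$ hypothesis: since $K$ is henselian of residue characteristic $0$, Denef--Pas quantifier elimination relative to $RV$ (together with algebraicity of $RV$) reduces $\emptyset$-definable subsets of $k^m \times \Gamma^n$ to Boolean combinations of rectangles over the ordered group language on $\Gamma$ and the field language on $k$, and Lemma \ref{orthogonal} delivers both conditions 3 and 4. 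With Situation \ref{closed-bounded-h0} verified, Proposition \ref{closed-bounded-0} produces the one-dimensional intersection property, which is exactly Situation \ref{closed-bounded-h1}.

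For the mixed characteristic case, Proposition \ref{closed-bounded-p} has already established Situation \ref{closed-bounded-h1} directly, via the detour through a residue characteristic $0$ coarsening and the auxiliary axiomatic framework of Section \ref{seccom}. Once Situation \ref{closed-bounded-h1} is available in both regimes, the inductive argument of Proposition \ref{closed-bounded-0.5} extends the intersection property from $K$ to $K^n$, completing the proof. The only delicate conceptual point, and hence the main obstacle to watch for, is the invocation of Lemma \ref{orthogonal} in the residue characteristic $0$ case: one must check that the algebraic $RV$ hypothesis really does cut off all cross-sort relations between $\Gamma$ and $k$, so that the basic mixed relations reduce to Boolean combinations of rectangles. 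The verification is routine given Denef--Pas, but it is where algebraicity of $RV$ is genuinely consumed.
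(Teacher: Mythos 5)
Your proposal matches the paper's own proof, which is precisely the one-line assembly of Propositions \ref{closed-bounded-0}, \ref{closed-bounded-p}, and \ref{closed-bounded-0.5}; your verification of the hypotheses of Situation \ref{closed-bounded-h0} in the residue characteristic $0$ case (conditions 1 and 2 from $0$-h-minimality via preparation, conditions 3 and 4 from Denef--Pas plus algebraic $RV$ and Lemma \ref{orthogonal}) is exactly the argument the paper gives in the discussion following that situation. No discrepancies.
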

\section{Local groups contain definable open subgroups}\label{seclocgro}
Here we show that a definable topological group has a definable family of open
subgroups that form a neighborhood base of the identity, for general classes
of valued fields

\begin{situation}\label{compact}
Here $K$ is either a 
1-h-minimal valued field of residue characteristic $0$ with algebraic $RV$,
or 1-h-minimal of mixed characteristic with algebraic $RV_{p,\bullet}$,
or a definably spherically complete 
C-minimal expansion of ACVF$_{p,p}$ with algebraic $RV$.
\end{situation}

\begin{proposition}\label{local-groups-are-locally-closed}
Suppose $K$ is 1-h-minimal.
Suppose $X$ is a definable topological 
local group with the underlying topology being the subspace topology of $K^n$ in $X$,
then $X$ is isomorphic as a definable local group to a $Y\subset K^n$ which is closed
in $K^n$.
\end{proposition}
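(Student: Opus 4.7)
Plan: The plan is to shrink $X$ to a definable neighborhood $V$ of $e$ which is closed in $K^n$, and take $Y = V$ with the restricted local group structure. The key tool is Proposition \ref{main-compactness}, which rules out accumulation of frontier points of $X$ near $e$.

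First I set up the shrinking. Choose a definable open neighborhood $V_0$ of $e$ in $X$ on which the inverse $i$ and multiplication $m$ are defined and continuous, with $i(V_0) \subset V_0$ and $m(V_0 \times V_0) \subset V_0$ (after suitable further shrinking, using continuity of $m$ at $(e,e)$). Since $X$ has the subspace topology of $K^n$, write $V_0 = W_0 \cap X$ for an open definable $W_0 \subset K^n$. Using continuity of $i$ at $e$ with $i(e) = e$, I pick a closed ball $B = \{x \in K^n : v(x-e) \geq r\}$ contained in $W_0$ with $B \cap X \subset V_0$ and $i(B \cap X) \subset B$, and set $V := B \cap X = B \cap V_0$. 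This is a definable closed-bounded subset of $X$ in the sense of the subspace topology, stable under $i$.

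The central step is to show that $V$ is closed in $K^n$. Since $V = B \cap X$ with $B$ closed, this is equivalent to the condition $B \cap \overline{X} \subset X$, i.e., the frontier $\partial X := \overline{X} \setminus X$ is disjoint from the ball $B$. I would argue this by contradiction. Suppose $p \in B \cap \partial X$; then by $\omega$-saturation there is some $x \in X$ with $v(x - p)$ arbitrarily large, and in particular $x \in V_0$ so that $i(x)$ is defined and (by the bound above) lies in $B$. The identity $x \cdot i(x) = e$ combined with continuity of $m$ at $(e,e)$ forces $i(x)$ to stay in a shrinking closed-bounded region about $e$. Applying Proposition \ref{main-compactness} to the decreasing family of closed-bounded sets $\{i(x) : x \in V,\, v(x-p) \geq s\}$ (closures taken in $B$) as $s$ increases, I extract a definable limit $q \in B$. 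The relation $m(x, i(x)) = e$ together with continuity of $m$ at $(p,q)$ (once $(p,q) \in V_0 \times V_0$ is ensured by the same compactness-and-shrinking argument applied to $p$) would force $m(p,q) = e$, placing $p$ in the image of the inverse map and hence in $X$, contradicting $p \in \partial X$.

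Once $V$ is shown to be closed in $K^n$, I take $Y = V$ and transfer the local group structure of $X$ by restriction; the inclusion $V \hookrightarrow X$ is an isomorphism of definable local groups, since any open neighborhood of the identity of a local group determines the same local group up to isomorphism. The main obstacle is the central step: since $m$ and $i$ are only defined inside $V_0 \subset X$, one cannot naively extend them to a putative frontier point $p$. Making the ``limit of candidate inverses'' argument work requires Proposition \ref{main-compactness} precisely because ordinary topological compactness fails in this setting; the compactness property provides the definable limits needed to pin down the group-theoretic identities at $p$ and derive the contradiction.
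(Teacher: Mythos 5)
There is a genuine gap at the central step, and it is exactly the one you flag yourself without resolving. You want to show $B\cap X$ is closed by assuming $p\in B\cap(\overline{X}\setminus X)$ and deriving a contradiction from ``$m(p,q)=e$'' or ``$p=i(q)$''. But $m$ and $i$ are only defined on subsets of $X\times X$ and $X$, and $p\notin X$, so continuity of $m$ at $(p,q)$ is meaningless; and the limit $q$ you extract from Proposition \ref{main-compactness} lies only in $\overline{\{i(x):\dots\}}\subseteq B\cap\overline{X}$, so it may itself be a frontier point and need not be in the domain of $i$. Trying to fix this by applying ``the same compactness-and-shrinking argument to $p$'' is circular: deciding whether such limit points belong to $X$ is precisely the statement you are trying to prove. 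There is a further mismatch of hypotheses: Proposition \ref{main-compactness} requires algebraic $RV$ (or $RV_{p,\bullet}$), whereas the proposition you are proving assumes only $1$-h-minimality, so even a repaired version of your argument would prove a weaker statement. Finally, the conclusion you aim for (that $X$ itself is closed in a small ball around the identity) is stronger than what is asserted; the paper only records that as a remark, valid under the extra closedness-theorem hypotheses, and even then obtains it indirectly.

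The paper's route is different and avoids compactness entirely. It takes $C$ to be the closure of a symmetric neighborhood $U\cap X$ of the identity and invokes a dimension/frontier fact for $1$-h-minimal fields (Proposition 2.17 and Fact 6.7 of \cite{AcostaHasson}) to find a nonempty subset $W\cap C$ of $U\cap X$ that is relatively open in $C$, with $W$ clopen; this set is automatically closed in $K^n$. It is not a neighborhood of the identity, so one then uses left translation $L_g$ by a point $g\in W\cap C$ to pull the local group structure over to $W\cap C$, producing the closed isomorphic copy $Y$. The essential idea you are missing is this use of homogeneity: rather than proving $X$ is closed near $e$ (which is delicate and needs more hypotheses), one finds \emph{some} closed piece of $X$ that is relatively open in $\overline{X}$ and transports the identity there by a translation.
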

\begin{proof}
If $X$ is a definable local group we can choose $U$ definable open such that 
$U\cap X$ is symmetric and transitivity makes sense and holds.

Denote $C$ the closure of $U\cap X$. By Proposition 2.17
and Fact 6.7 in \cite{AcostaHasson} 
(applied to $X\cap U\subset C$)
we obtain that $U\cap X$ contains a nonempty set
relatively open in $C$, say $W\cap C$ for some open $W$ open.
Making $W$ smaller we may assume it is also closed.
If $g\in W\cap C$ then left multiplication by $g$ is a topological isomorphism
$L_g:g^{-1}(W\cap C)\to W\cap C$, and $g^{-1}(W\cap C)$ is open in $X$ around $0$.

Transferring the local group group structure from $g^{-1}(W\cap C)$ to $W\cap C$ via
$L_g$ we finish.
\end{proof}
As a remark we note that $X$ itself is locally closed around $0$ whenever
the closedness theorem applies, as then $g^{-1}(W\cap C)$, being the 
continuous image by a definable map
of a closed bounded set, is closed and bounded.
\begin{proposition}
Suppose $K$ is as in Situation \ref{compact}.
Suppose $X\subset K^n$ is a definable set, and in $X$ with the 
subspace topology there is the structure of a definable local topological
group.
Then there is a decreasing definable family $\{G_r\}_r$ of open subgroups of $X$
indexed by $\Gamma_{>0}$ which form a neighborhood basis of the identity.
\end{proposition}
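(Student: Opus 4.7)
The plan is to build each $G_\gamma$ as a ``symmetric stabilizer'' of a small ball and then intersect over $\gamma' \le \gamma$ to make the family decreasing. First I would reduce to the situation where $X$ is closed locally around the identity $0$, using Proposition \ref{local-groups-are-locally-closed} and the remark immediately following it (which covers the ACVF$_{p,p}$ case of Situation \ref{compact} via the closedness theorem). I then fix $\gamma_0 \in \Gamma$ large enough that $U := B_{\gamma_0}(0) \cap X$ is a neighborhood of $0$ on which multiplication $m$ and inversion are defined and continuous, and I abbreviate $B_\gamma := B_\gamma(0) \cap X$. Since closed balls in a valued field are clopen, each $B_\gamma$ is clopen in $X$.

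For each $\gamma \ge \gamma_0$ I take as candidate
\[
H_\gamma \;=\; \{g \in U : g \cdot B_\gamma \subseteq B_\gamma \text{ and } g^{-1} \cdot B_\gamma \subseteq B_\gamma\}.
\]
A direct check shows $H_\gamma$ is a local subgroup of $X$, and evaluating at $0 \in B_\gamma$ gives $H_\gamma \subseteq B_\gamma$. To see $H_\gamma$ is open I would apply Proposition \ref{wallman} with $L = \{0\}$, $S = B_\gamma$, $W = B_\gamma$ (open since $B_\gamma$ is clopen), and $f = m$: this produces an open neighborhood $V_1 \ni 0$ with $V_1 \cdot B_\gamma \subseteq B_\gamma$. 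Applying the analogous argument to the continuous map $(g,y) \mapsto g^{-1} y$ controls the second condition, so some $B_{\tau(\gamma)}$ is contained in $H_\gamma$. Being a subgroup containing an open neighborhood of $0$, $H_\gamma$ is therefore open.

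The family $\{H_\gamma\}$ is not manifestly decreasing in $\gamma$, so I would replace it by
\[
G_\gamma \;=\; \bigcap_{\gamma_0 \le \gamma' \le \gamma} H_{\gamma'},
\]
which is a definable, decreasing family of subgroups contained in $B_\gamma$. The hard part will be showing $G_\gamma$ is still open, i.e.\ finding a single $t(\gamma) \in \Gamma$ with $B_t \subseteq H_{\gamma'}$ for every $\gamma' \in [\gamma_0, \gamma]$ simultaneously. My approach is to invoke Proposition \ref{main-compactness} applied to the ``bad'' set
\[
\mathrm{Bad}_\gamma \;=\; \{g \in U : \exists \gamma' \in [\gamma_0, \gamma],\ \exists y \in B_{\gamma'},\ gy \notin B_{\gamma'} \text{ or } g^{-1} y \notin B_{\gamma'}\}.
\]
If no uniform $t$ existed, then the decreasing family $\overline{\mathrm{Bad}_\gamma} \cap B_t$ would consist of nonempty closed bounded subsets of $K^n$, and Proposition \ref{main-compactness} would force $0 \in \overline{\mathrm{Bad}_\gamma}$. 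To rule this out I would use continuity of $m$ together with a further Wallman-type argument (Proposition \ref{wallman}) applied to the closed bounded set $\{0\} \times B_{\gamma_0}$, producing a single neighborhood of $0$ on which left and right translation preserves the clopen ball $B_{v(y)}$ for every $y \in B_{\gamma_0}$, contradicting $0 \in \overline{\mathrm{Bad}_\gamma}$.

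Once openness of $G_\gamma$ is established, the conclusion is immediate: $\{G_\gamma\}_{\gamma \ge \gamma_0}$ is a decreasing definable family of open subgroups of $X$ with $G_\gamma \subseteq B_\gamma$, and combined with $\bigcap_\gamma B_\gamma = \{0\}$ this gives a neighborhood basis of the identity; re-indexing by $\Gamma_{>0}$ is a cosmetic adjustment.
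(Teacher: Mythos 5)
Your skeleton (reduce to $X$ closed and bounded via Proposition \ref{local-groups-are-locally-closed}, stabilize clopen balls, symmetrize, and force monotonicity by imposing the stabilizing condition for all radii up to $\gamma$) is the same as the paper's, and you correctly isolate the one hard point: producing a \emph{single} neighborhood of a given point that preserves $B_{\gamma'}$ for all $\gamma'\in[\gamma_0,\gamma]$ simultaneously. But the step you propose for exactly that point does not work. The conclusion you claim from the ``further Wallman-type argument'' --- a neighborhood $N$ of the identity whose elements preserve $B_{v(y)}$ for \emph{every} $y\in B_{\gamma_0}$ --- is false: the identity lies in every $B_s$, so any $g\in N$ satisfies $g=g\cdot e\in gB_s\subseteq B_s$ for all $s\geq\gamma_0$, forcing $N\subseteq\bigcap_s B_s=\{e\}$. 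The cutoff at $\gamma$ is not cosmetic; it is what keeps the statement true. Moreover, Proposition \ref{wallman} applied to the pair $(\{0\},B_{\gamma_0})$ cannot encode the condition ``$gy\in B_{v(y)}$ for each $y$'' at all, since that condition is not of the form $f(\{g\}\times S)\subseteq W$ for a single fixed open $W$: the target ball varies with $y$. Separately, the compactness detour through $\overline{\mathrm{Bad}_\gamma}\cap B_t$ is a no-op --- ruling out $0\in\overline{\mathrm{Bad}_\gamma}$ is literally the statement that some ball around $0$ avoids $\mathrm{Bad}_\gamma$, which is what you set out to prove --- and it only addresses openness at the identity, leaving you to invoke that a local subgroup containing a neighborhood of the identity is open, which itself needs an argument in the local-group setting.

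The missing idea, which is the heart of the paper's proof, is to bundle all the balls into one set: $S_\gamma=\{(x,y)\in K^\times\times X : y\in B_{v(x)},\ v(x)\leq\gamma\}$, which is clopen in $K^\times\times X$ and bounded in $K\times K^n$. With $f(y,x,y')=(x,yy')$ one has $b\in V_\gamma$ if and only if $f(\{b\}\times S_\gamma)\subseteq S_\gamma$, so Proposition \ref{wallman} applied with $L=\{b\}$, $S=S_\gamma$ and $W$ an open set cutting out $S_\gamma$ yields an open ball around $b$ contained in $V_\gamma$; since this is done at an arbitrary $b\in V_\gamma$, openness of $G_\gamma=V_\gamma\cap V_\gamma^{-1}$ follows directly. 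The extra $K^\times$-coordinate recording the radius via $v(x)$, together with the cutoff $v(x)\leq\gamma$, is precisely what turns your quantification over $\gamma'\in[\gamma_0,\gamma]$ into a single containment to which Proposition \ref{wallman} applies.
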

\begin{proof}
We start by finding a definable open subgroup in $X$, and then show 
the stronger statement.

By Proposition \ref{local-groups-are-locally-closed} we may assume $X$ is closed.
Making $X$ smaller we may assume $X$ is closed and bounded.
Let $U\subset X$ be a closed and open definable subset of $X$ containing
the identity, and on
on which transitivity makes sense and holds, and similarly for inverses and
the identity element.
In that case we define $V$ to be the set of $a$ in $U$ such that 
$aU\subset U$. Then we have that $V$ contains $1$ and is closed
under products. Also $V$ is open in $X$ becase of Proposition \ref{wallman}. 
Then we have that $G=V\cap V^{-1}$ is an open subgroup of $X$.

Now if $B_r\subset U$ is a definable family of open and closed subsets of 
$X$ indexed by $r\in \Gamma_{\geq 0}$, which are bounded and decreasing, then 
we take $V_r$ to be the set of $a\in X$ such that 
$aB_s\subset B_s$ for all $s\leq r$. To see that this is an open set 
consider $S=\{(x,y)\in K^{\times}\times X\mid y\in B_{v(x)}\}$, and 
$S_r=S\cap\{(x,y)\mid v(x)\leq r\}$.
Then it becomes a straightforward exercise to see that $S$
is open and closed in $K^{\times}\times X$, and bounded in $K\times K^n$.
If we consider the continuous function 
$f:U\times K^{\times}\times  U\to K^{\times}\times X$
defined by $(y,x,y')\mapsto (x,yy')$, then $b\in V_r$ if and only if
$f(\{b\}\times S_r)\subset S_r$, and so by Proposition \ref{wallman} there is 
is an open ball around $b$ contained in $V_r$.
We conclude $G_r=V_r\cap V_r^{-1}$ is a decreasing definable 
family of open sets indexed
by $r$ as required.
\end{proof}

\appendix
\section{Ind-definable sets}\label{ind-definable}
Given $S$ a linear order definable in some model,
we define $\bar{S}$ to be set of definable Dedekind cuts of $S$,
together with
a maximal or minimal element if $S$ does not have it.

More precisely $x\in\bar{S}$ if $x\subset S$ is a definable
set satisfying:
\begin{enumerate}
\item If $ a\in x$ and $b<a$ then $b\in x$.
\item If $x$ has a supremum $a$ in $S$, then $a\in x$.
\end{enumerate}
If $x\in \bar{S}$ and $y\in\bar{S}$ we say $x\leq y$ if $x\subset y$.

This relation gives a linear order in $\bar{S}$.
With this linear order the set $S$ becomes the maximum
of $\bar{S}$. Also $\emptyset$ is a minimum of $\bar{S}$ if $S$ has 
no minimal element, and if $S$ has a minimal element $m$, then
 $\{m\}$ is the minimum of $\bar{S}$.

We consider the map 
$\iota:S\to \bar{S}$ given by $a\mapsto \{b\in S\mid b\leq a\}$.
This is an injection and $\iota(a)<\iota(b)$ if and only if
$a<b$. We abuse notation and denote $S\subset \bar{S}$ via this map.
\begin{lemma}\label{completion}
If $X\subset S$ is a definable subset then $\text{sup}(X)$ and 
$\text{inf}(X)$ exist in $\bar{S}$.

If $x\in \bar{S}$, and $s\in S$, then $s\in x$ if and only if
$s\leq x$.

If $x,y\in \bar{S}$ and $x<y$, then there are $s,s'\in S$ such that 
$x\leq s<s'\leq y$.
\end{lemma}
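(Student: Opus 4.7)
The plan is to prove the three assertions in order, all by direct unpacking of the definitions of $\bar S$ and of the order on it.

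For the existence of $\sup X$ (and symmetrically $\inf X$), I would define
\[
\sup X \;:=\; \bigl\{\, b\in S : b\leq c \text{ for every }c\in S \text{ that is an upper bound of }X \text{ in }S\,\bigr\}.
\]
This is plainly definable. It is downward closed (if $b'\leq b$ and $b$ lies under every upper bound of $X$, so does $b'$). It also contains its supremum in $S$ whenever one exists, because any upper bound $c$ of $X$ is automatically an upper bound of $\sup X$, so the supremum must be $\leq c$ for all such $c$, hence in $\sup X$. Thus $\sup X\in\bar S$. To see that this is the sup of $\iota(X)$ in $\bar S$, I would verify (a) $X\subseteq \sup X$ (trivial, take $c=a$) and (b) if $y\in\bar S$ with $X\subseteq y$, then $\sup X\subseteq y$. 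For (b) I split: if $y=\iota(m)$ is principal, then $m$ is an upper bound of $X$, so $b\in\sup X$ implies $b\leq m$, i.e. $b\in y$; if $y$ is non-principal then $y$ has no max and no supremum in $S$, so either some upper bound of $X$ lies in $y$ (giving some $c\in y$ with $b\leq c$, hence $b\in y$) or $X$ is cofinal in $y$, in which case one computes directly that $\sup X=y$.

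Part (ii) is essentially a definition-unwinding. By definition $s\leq x$ in $\bar S$ means $\iota(s)\subseteq x$, i.e. every $b\in S$ with $b\leq s$ lies in $x$; by downward closure of $x$ this is equivalent to the single condition $s\in x$.

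For (iii), suppose $x<y$, so there exists $s\in y\setminus x$. Downward closure of $x$ forces every element of $x$ to be $\leq s$ (otherwise $s$ would fall into $x$), so $s$ is an upper bound of $x$ and $x\leq s$. I would then split on whether $x$ has a maximum. If $x$ has a max $t\in S$, then $t<s$ (since $s\notin x$ and $s$ is an upper bound) and the pair $(t,s)$ works. If $x$ has no max, I claim $y\setminus x$ contains at least two elements: for if $y\setminus x=\{s\}$, then $s$ is the maximum of $y$ (otherwise some element of $y$ above $s$ is also outside $x$) and $x=\{b\in S : b<s\}$; but then the supremum of $x$ in $S$ is $s$ unless $s$ has an immediate predecessor $t\in S$, and since $s\notin x$ the defining condition for $x\in\bar S$ forces the predecessor to exist, which then becomes the maximum of $x$, contradicting the assumption. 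So pick $s_1<s_2$ in $y\setminus x$; each is an upper bound of $x$ and each lies in $y$, so $(s_1,s_2)$ is the desired pair.

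The main obstacle is the least-containment check in (i), where the case of a non-principal $y$ requires the cofinality alternative; and in (iii) the delicate step is the singleton-ruling argument, which hinges on using the defining sup-inclusion property of $\bar S$ in just the right way.
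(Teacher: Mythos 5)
Your proof is correct: the explicit cut $\{b\in S : b\leq c \text{ for every upper bound } c \text{ of } X\}$ does represent $\sup X$ in $\bar{S}$, and your appeals to the closure condition (that a cut contains its supremum in $S$ when it exists) in the cofinality subcase of the first assertion and in ruling out a singleton $y\setminus x$ in the third are exactly where that condition is needed. The paper omits the proof as following easily from the definitions, and your argument is precisely that routine verification.
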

The proof follows easily from the definitions and it is omitted.

Note that when $X\subset S$ is not bounded above, the supremum of $X$
is the maximum of $\bar{S}$, 
and when $X$ is empty the supremum is the the minimum
of $\bar{S}$. Similarly, when $X$ is unbounded below the infimum is 
the minimum of $\bar{S}$, and when $X$ is empty the infimum is the maximum
of $\bar{S}$.

We will use the notion of an ind-definable set.
The material following in this appendix is an elaboration of remark 2.2.9 of
\cite{HrLoe}. This remark says
that an ind-definable set is a directed colimit of definable sets.
These objects can be seen as 
sets with extra structure, and given an explicit description of the
underlying set there is in practice only one reasonable way of giving
the set the structure of an ind-definable set. This slight ambiguity
can be removed by using the Yoneda embedding functor. 
Next we provide the details of this construction.

Given a model $M$ in a possibly multisorted language, we consider the 
category of all $\emptyset$-definable sets in $M$ with $\emptyset$-definable
maps, and denote it $\text{Def}_0$. 
This category comes equipped with a forgetful functor
$\text{Fr}:\text{Def}_0\to Sets$ given by $D\mapsto D(M)$, which is faithful.
In this situation one calls $\text{Def}_0$ a category of sets with extra structure,
the underlying set of an object $D$ of $\text{Def}_0$ is $\text{Fr}(D)$, and it is 
often denoted by the 
same symbol as $D$, and informally identified with $D$,
 when there is no risk of confusion.
Also, given two $\emptyset$-definable sets $D_1,D_2$ a function of the 
underlying sets $f:Fr(D_1)\to Fr(D_2)$ is said to be $\emptyset$-definable
if there is an $\emptyset$-definable map $g:D_1\to D_2$ such that 
$\text{Fr}(g)=f$. In this case $f$ and $g$ will be denoted by the same symbol
and informally identified when there is no risk of confusion.

An $\emptyset$-intepretable set in $M$ is a set of the form $Y/E$ where 
$Y$ is $\emptyset$-definable and $E\subset Y\times Y$ is an
 $\emptyset$-definable equivalence relation.
The $\emptyset$-interpretable sets form a category
$\text{Int}_0$, with a forgetful functor $\text{Fr}:\text{Int}_0\to Sets$.
We have a fully faithful inclusion functor $\text{Def}_0\to \text{Int}_0$ which 
commutes with the forgetful functors.

When the model $M$ does not have $\emptyset$-definable elements in every sort,
it is more convenient to work in a category larger than $\text{Int}_0$ where there
are finite disjoint unions.
We define this category $\text{fInt}_0$ next. Its objects are the 
finite sequences $(X_k)_{k<n}$ with $X_k$ an $\emptyset$-interpretable object.
This object is denoted suggestively as $\bigsqcup_{k<n}X_k$.
Given $\emptyset$-interpretable sets $X$ and $Y_k$,
a morphism $X\to (Y_k)_{k<n}$ is given by a partition of $X$ into
$\emptyset$-interpretable subsets 
$X_k\subset X$ and a collection of $\emptyset$-definable maps
$X_k\to Y_k$. A morphism $(X_k)_{k<n}\to Y$ where $Y$ is an 
$\emptyset$-f-interpretable object, is a collection of morphisms
$X_k\to Y$. We omit the definition of the composition and the verification
that this gives a category. Note that $\text{Int}_0$ has a final object given by
$X/E$ where $E=X^2$, and $X$ is arbitrary, so it is not necessary to add
a formal final object to $\text{fInt}_0$.

We have a fully faithful functor 
$\text{Int}_0\to \text{fInt}_0$ and a forgetful faithful functor
$\text{fInt}_0\to Sets$ which commutes with the forgetful functor in $\text{fInt}_0$.
\begin{proposition}\label{quotient}
Suppose $X$ is an object in $\text{fInt}_0$. Suppose $E\subset X^2$ is an
$\emptyset$-definable subset, which is an equivalence relation.

Then there exist a quotient object $X/E$ in $\text{fInt}_0$.
Satisfying $\text{Fr}(X/E)=\text{Fr}(X)/\text{Fr}(E)$, and
characterized by the natural bijection
$\text{Mor}(X/E,Z)=\text{Mor}(X,Z)\times_{\text{Mor}(\text{Fr}(X)/\text{Fr}(E),\text{Fr}(Z))}\text{Mor}(\text{Fr}(X),\text{Fr}(Z))$.

In other words there is a canonical $\emptyset$-definable map 
$p:X\to X/E$ such that, $\text{Fr}(X/E)=\text{Fr}(X)/\text{Fr}(E)$ and $\text{Fr}(p)$
equals the canonical projection $\text{Fr}(X)\to \text{Fr}(X)/\text{Fr}(E)$.
And also, for every $f:X\to Z$ which is $\emptyset$-definable
and such that $\text{Fr}(f):\text{Fr}(X)\to \text{Fr}(Z)$ factors as
$g:Fr(X)/Fr(E)\to Fr(Z)$, there exists a unique $\bar{f}:X/E\to Z$
$\emptyset$-definable map such that $\bar{f}p=f$ and $\text{Fr}(\bar{f})=g$.
\end{proposition}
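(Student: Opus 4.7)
The plan is to construct $X/E$ by stratifying the set of $E$-equivalence classes according to which formal components $X_k$ they meet, and realizing each stratum as an interpretable quotient. Write $X = \bigsqcup_{k<n} X_k$ with $X_k$ an $\emptyset$-interpretable object, and set $E_{k,l} := E \cap (X_k \times X_l)$, an $\emptyset$-definable subset of $X_k \times X_l$. The key structural fact, immediate from $E$ being an equivalence relation, is that each $E_{k,l}$ descends to a \emph{partial injection} between $X_k/E_{k,k}$ and $X_l/E_{l,l}$: if $xEy$ and $xEy'$ with $y,y'\in X_l$ then transitivity gives $y E_{l,l} y'$, and symmetrically in $k$. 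Consequently, the ``support'' function $I(x) := \{l < n : \exists y \in X_l,\ xEy\}$ depends only on the $E$-class of $x$, takes only finitely many values, and its fibers inside each $X_k$ are $\emptyset$-definable.

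For each non-empty $I \subseteq \{0,\dots,n-1\}$ set $k_0(I) := \min I$ and let
\[
Z_I := \{x \in X_{k_0(I)} : I(x) = I\},
\]
an $\emptyset$-definable $E_{k_0(I),k_0(I)}$-invariant subset. The quotient $\mathrm{Cl}_I := Z_I/E_{k_0(I),k_0(I)}$ is an $\emptyset$-interpretable set, and the candidate for $X/E$ is
\[
\bigsqcup_{\emptyset \neq I \subseteq \{0,\dots,n-1\}} \mathrm{Cl}_I
\]
in $\mathrm{fInt}_0$. The projection $p : X \to X/E$ is given on each $X_k$ by partitioning into the definable pieces $\{x \in X_k : I(x) = I\}$ for each $I \ni k$ and sending $x$ in such a piece to the unique $E_{k_0(I),k_0(I)}$-class in $Z_I$ which is $E$-related to $x$; existence and uniqueness of this class is precisely the partial-injection fact, and definability is encoded in $E_{k,k_0(I)}$ restricted to the relevant piece.

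Then $\mathrm{Fr}(X/E) = \mathrm{Fr}(X)/\mathrm{Fr}(E)$ and $\mathrm{Fr}(p)$ being the canonical projection follow by direct unravelling: $p(x) = p(x')$ forces equal supports and a common $E_{k_0,k_0}$-partner, which combined with symmetry and transitivity yields $xEx'$, while surjectivity holds by construction. For the universal property, given $f : X \to Z$ in $\mathrm{fInt}_0$ such that $\mathrm{Fr}(f)$ factors as $g : \mathrm{Fr}(X)/\mathrm{Fr}(E) \to \mathrm{Fr}(Z)$, I define $\bar f$ stratum by stratum: on $\mathrm{Cl}_I = Z_I/E_{k_0(I),k_0(I)}$ take the descent of $f|_{Z_I}$, which is $E_{k_0(I),k_0(I)}$-invariant because $f$ is $E$-invariant, and realize it by the standard definable-quotient construction on a single interpretable domain, handled component by component in the target. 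Uniqueness of $\bar f$ is forced by surjectivity of $\mathrm{Fr}(p)$. The main obstacle is organizational rather than mathematical: making the partial-injection observation precise enough to extract a canonical definable map $p$ on each $X_k$, and then checking that the piecewise descriptions of $p$ and $\bar f$ genuinely assemble into morphisms in the bespoke sense used by $\mathrm{fInt}_0$.
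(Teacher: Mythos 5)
Your construction is essentially the same as the paper's: the paper sets $Y_k=X_k/E_k$ and takes $X/E=\bigsqcup_k\bigl(Y_k\setminus\bigcup_{r<k}Y_r\bigr)$, i.e.\ it also assigns each $E$-class to its minimal-index component and quotients there; your stratification by the full support set $I$ merely refines the paper's pieces $Y_k\setminus\bigcup_{r<k}Y_r$ into the sets $\mathrm{Cl}_I$ with $\min I=k$. The proposal is correct, and in fact supplies the partial-injection observation and the verification of the universal property that the paper leaves as ``some details omitted.''
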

\begin{proof}
Let $X=\bigsqcup_k X_k$ and $E_k=E\cap X_k\times X_k$. Let
$Y_k=X_k/E_k$ which is an object in $\text{Int}_0$.
Then as sets $X/E=\cup_k Y_k$ (not necessarily disjoint union).
We take $Z_k\subset Y_k$ the $\emptyset$-definable subset given by 
$Y_k\setminus \cup_{r<k}Y_r$.
Then $X/E=\bigsqcup_kZ_k$. Some details omitted.
\end{proof}
Now we define the category of $\emptyset$-ind-f-interpretable sets
as the ind-completion of the category $\text{fInt}_0$.
So its objects are given by diagrams $(X_i)_{i\in I}$ in $\text{fInt}_0$
indexed by a directed set $I$, and denoted suggestively 
$\text{colim}_iX_i$, and the morphisms are given by 
the formula 
$\text{Mor}(\text{colim}_iX_i,\text{colim}_jY_j)=\text{lim}_i\text{colim}_j\text{Mor}(X_i,Y_j)$.
We get a fully faithful functor $\text{fInt}_0\to \text{indfInt}_0$
given by the constant system, and a forgetful functor
$\text{Fr}:\text{indfInt}_0\to Sets$. The forgetful functor acts on objects by the formula
$\text{Fr}(\text{colim}_iX_i)=\text{colim}_iFr(X_i)$, 
 and it is faithful when $M$ is $\omega$-saturated.

From here on out we assume $M$ is $\omega$-saturated.

The category $\text{indfInt}_0$ has directed colimits and $\text{Fr}$ commutes with them.
The category $\text{fInt}_0$ has finite limits (e.g., products of two elements and
fiber products) and $\text{Fr}$ commutes with them.
From this one gets that $\text{indfInt}_0$ has finite limits and $\text{Fr}$ 
commutes with them.

The category $\text{fInt}_0$ has finite coproducts and $\text{Fr}$ commutes with them.
This implies that $\text{indfInt}_0$ has finite coproducts and $\text{Fr}$ commutes
with them.

\begin{proposition}\label{yoneda-1}
If we denote $F(C,Sets)$ the category of contravariant functors
$C\to Sets$, then the functor
$Y:\text{indfInt}_0\to F(\text{Int}_0,Sets)$ which is the restriction of the Yoneda functor,
that is, the functor which acts on objects as $Y(X)(D)=\text{Mor}(D,X)$, is fully
faithful.
\end{proposition}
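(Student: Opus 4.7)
The strategy is to factor the functor $Y$ as the composition of the standard ind-Yoneda embedding $\tilde Y: \text{indfInt}_0 \to F(\text{fInt}_0, Sets)$, sending $\text{colim}_i X_i$ to the presheaf $D \mapsto \text{colim}_i \text{Mor}(D, X_i)$, followed by the restriction functor $F(\text{fInt}_0, Sets) \to F(\text{Int}_0, Sets)$ along the inclusion $\text{Int}_0 \hookrightarrow \text{fInt}_0$. I plan to prove that each factor is fully faithful on the image of the preceding functor, and then invoke that the composition of fully faithful functors is fully faithful.

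For the first factor, fix $X = \text{colim}_i X_i$ and $Y = \text{colim}_j Y_j$ in $\text{indfInt}_0$. The standard ind-completion calculation applies: natural transformations out of a colimit of presheaves form the corresponding limit of natural transformations, and by the ordinary Yoneda lemma applied to the representables $h_{X_i} = \text{Mor}(-, X_i)$ one obtains
\[
\text{Nat}(\tilde Y(X), \tilde Y(Y)) \;=\; \text{lim}_i \text{Nat}(h_{X_i}, \tilde Y(Y)) \;=\; \text{lim}_i \tilde Y(Y)(X_i) \;=\; \text{lim}_i \text{colim}_j \text{Mor}(X_i, Y_j),
\]
using that colimits in presheaves are computed pointwise. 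The right-hand side is precisely $\text{Mor}_{\text{indfInt}_0}(X, Y)$ by definition.

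For the second factor, the crucial property is that any presheaf in the image of $\tilde Y$ carries finite coproducts in $\text{fInt}_0$ to finite products in $Sets$. For representables $h_D = \text{Mor}(-, D)$ this is the universal property of the coproduct, and the property persists under filtered colimits since filtered colimits commute with finite products in $Sets$. Because every object of $\text{fInt}_0$ is by construction a finite coproduct of objects of $\text{Int}_0$, any natural transformation defined on $\text{Int}_0$ between two such presheaves extends uniquely to $\text{fInt}_0$, componentwise via the product decomposition. Hence the restriction map between the relevant sets of natural transformations is a bijection, and composing with the previous step gives the result.

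The main obstacle I foresee is bookkeeping rather than conceptual: one has to check carefully that the concrete definition of $\text{Mor}_{\text{indfInt}_0}$ given in the appendix matches the limit-colimit formula arising from the general ind-completion, so that the final equality in the displayed chain can be read as an identification with morphisms in $\text{indfInt}_0$. Once that identification is in place, the remainder of the proof is the routine content of the assertion that Yoneda embeds the ind-completion fully faithfully into presheaves, together with the elementary extension of natural transformations along the coproduct decomposition.
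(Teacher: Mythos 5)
Your proof is correct and follows essentially the same route as the paper: the paper also factors the restricted Yoneda functor through presheaf categories and recovers a representable functor from its restriction via exactly your two formulas, namely $\lim_i F(X_i)$ over the directed system (your ind-Yoneda/filtered-colimit step) and $\prod_k F(X_k)$ over the finite coproduct decomposition (your coproducts-to-products step). The only cosmetic difference is that the paper packages these as explicit section functors $s_0,s_1$ quasi-inverse to the restriction functors, while you verify full faithfulness of each factor directly; both treatments leave the same routine naturality checks to the reader.
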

\begin{proof}
We have the Yoneda embedding 
$Y_0:\text{indfInt}_0\to F(\text{indfInt}_0,Sets)$ and the restriction functors
$r_0:F(\text{indfInt}_0,Sets)\to F(\text{fInt}_0,Sets)$, and 
$r_1:F(\text{fInt}_0,Sets)\to F(\text{Int}_0,Sets)$. 

We have that $Y_0$ is a fully faithful functor by the Yoneda lemma.
The idea is that 
we can recover a representable functor by its action on $\text{Int}_0$.
More precisely one gets functors 
$s_1:F(\text{Int}_0,Sets)\to F(\text{fInt}_0,Sets)$, 
$s_0:F(\text{fInt}_0,Sets)\to F(\text{indfInt}_0,Sets)$, such that
$r_is_i$ is naturally equivalent to the identity and 
$s_ir_i$ restricted to the essential image of $\text{indfInt}_0$ is naturally
equivalent to the identity.
These functors are given by the following formulas on objects:
$s_1F(\bigsqcup_kX_k)=\Pi_kF(X_k)$
and $s_0F(\text{colim}_i X_i)=\text{lim}_iF(X_i)$. From here the rest of the verification
is straightforward.
\end{proof}
Given an $\emptyset$-ind-f-interpretable set $X$, we can also consider
$\text{Mor}(D,X)$ as a functor on $D$ running over the $\emptyset$-definable sets.
Note that we have that $\text{Mor}(D,X)\subset \text{Mor}(Fr(D),Fr(X))$.
Let us denote $\mathcal{G}(\text{Def}_0)$ the category of pairs $(F,Z)$,
where $Z$ is a set, and $F$ is a contravariant 
functor $F:\text{Def}_0\to Sets$ which is a 
subfunctor of $\text{Mor}(Fr(-),Z)$.
Then we have the Yoneda functor
$\text{indfInt}_0\to \mathcal{G}$ given by $X\mapsto (\text{Mor}(D,X),Fr(X))$.
\begin{proposition}\label{yoneda-2}
With the above notation
the Yoneda functor $\text{indfInt}_0\to \mathcal{G}(\text{Def}_0)$ is fully faithful.
\end{proposition}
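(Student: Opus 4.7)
The plan is to reduce to Proposition \ref{yoneda-1} by extending any morphism $(\alpha,g):Y(X_1)\to Y(X_2)$ in $\mathcal{G}(\text{Def}_0)$ between Yoneda images of $X_1,X_2\in\text{indfInt}_0$ to a natural transformation $\tilde\alpha$ between the full restricted Yoneda functors on all of $\text{Int}_0$. Once that is done, Proposition \ref{yoneda-1} provides a unique lift to a morphism $X_1\to X_2$ in $\text{indfInt}_0$, and this lift is readily checked to recover $(\alpha,g)$ under the Yoneda functor to $\mathcal{G}(\text{Def}_0)$.

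First I address faithfulness, which is essentially free. A morphism in $\mathcal{G}(\text{Def}_0)$ carries the datum of the underlying set map $g:\text{Fr}(X_1)\to\text{Fr}(X_2)$, and the forgetful functor $\text{Fr}$ on $\text{indfInt}_0$ is faithful under the standing assumption that $M$ is $\omega$-saturated, as noted just before Proposition \ref{yoneda-1}. Hence at most one morphism $f:X_1\to X_2$ in $\text{indfInt}_0$ can satisfy $\text{Fr}(f)=g$, so the Yoneda functor to $\mathcal{G}(\text{Def}_0)$ is injective on hom-sets.

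For fullness I construct $\tilde\alpha$ as follows. Given $D=Y/E\in\text{Int}_0$ with $p:Y\to D$ the canonical projection and $h\in\text{Mor}(D,X_1)$, I define $\tilde\alpha_D(h)$ to be the unique element of $\text{Mor}(D,X_2)$ whose underlying set map equals $g\circ\text{Fr}(h)$. Uniqueness is immediate from faithfulness of $\text{Fr}$, and existence goes as follows: apply $\alpha_Y$ to the definable morphism $h\circ p:Y\to X_1$ to obtain a morphism $Y\to X_2$ whose underlying set map $g\circ\text{Fr}(h)\circ\text{Fr}(p)$ factors through $\text{Fr}(D)=\text{Fr}(Y)/\text{Fr}(E)$; Proposition \ref{quotient} then descends this morphism uniquely to $D\to X_2$ with the required $\text{Fr}$. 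Naturality of $\tilde\alpha$ along any morphism $\phi:D'\to D$ in $\text{Int}_0$ is automatic from faithfulness of $\text{Fr}$, since both $\tilde\alpha_{D'}(h\circ\phi)$ and $\tilde\alpha_D(h)\circ\phi$ have underlying function $g\circ\text{Fr}(h)\circ\text{Fr}(\phi)$. Applying Proposition \ref{yoneda-1} to $\tilde\alpha$ produces the desired $f:X_1\to X_2$, and restricting back to $\text{Def}_0$ recovers $\alpha$ while $\text{Fr}(f)=g$ by construction.

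The main technical step is the existence of $\tilde\alpha_D(h)$ for interpretable $D$; this rests on Proposition \ref{quotient}, which identifies $\emptyset$-definable morphisms out of a quotient with those morphisms out of a definable presentation whose underlying set map descends. Beyond this existence step, the verification is essentially formal and parallel in spirit to the proof of Proposition \ref{yoneda-1}, with $\text{Fr}$-faithfulness absorbing all the compatibility and presentation-independence concerns.
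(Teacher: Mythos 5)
Your proof is correct. Both you and the paper reduce to Proposition \ref{yoneda-1} by extending data from $\text{Def}_0$ to $\text{Int}_0$, but the mechanics differ in a way worth noting. The paper constructs an actual extension functor $s:\mathcal{G}^r\to\mathcal{G}(\text{Int}_0)$ on a full subcategory $\mathcal{G}^r$ of $\mathcal{G}(\text{Def}_0)$ containing the representables, and in particular must define $s(F,Z)$ on arbitrary interpretable maps $Y/E_2\to X/E_1$ via a correspondence $R\subset Y\times X$; this is precisely the ``straightforward but cumbersome verification'' the paper omits. You instead extend only the given natural transformation $\alpha$ between two representable objects, and you exploit the faithfulness of $\text{Fr}$ on $\text{indfInt}_0$ (valid under the standing $\omega$-saturation hypothesis) to characterize each component $\tilde\alpha_D(h)$ uniquely by its underlying set map $g\circ\text{Fr}(h)$; this makes well-definedness, naturality, and independence of the presentation of $D$ all automatic, and it also yields faithfulness for free. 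The one point you gloss over is that Proposition \ref{quotient} is stated with target an object of $\text{fInt}_0$, whereas you apply it with target $X_2\in\text{indfInt}_0$; this is harmless, since the morphism $Y\to X_2$ obtained from $\alpha_Y(h\circ p)$ factors through some term of the directed system presenting $X_2$ by the definition of morphisms into an ind-object, and similarly your closing claim that restricting $Y(f)$ back to $\text{Def}_0$ forces $\text{Fr}(f)=g$ uses that every point of $\text{Fr}(X_1)$ lies in the image of some morphism from a definable set, which holds since each interpretable set receives a surjection from a definable one. Your route proves exactly the stated full faithfulness with less bookkeeping; the paper's functor $s$ is more general but is not needed elsewhere in the argument.
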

A pair $(F,P)$ is called representable if it is in the essential image
of the functor described above. The upshot of this proposition is 
that to give an $\emptyset$-ind-f-interpretable structure on the set
$P$ is the same thing as finding a subfunctor $F$ of $\text{Mor}(Fr(-),P)$
such that $(F,P)$ is representable.
\begin{proof}
That the Yoneda functor $\text{indfInt}_0\to \mathcal{G}(\text{Int}_0)$ is fully
faithful follows from Proposition \ref{yoneda-1}.
We have a restriction functor $r:\mathcal{G}(\text{Int}_0)\to \mathcal{G}(\text{Def}_0)$.
The idea is to find a functor 
$s:\mathcal{G}^r\to \mathcal{G}(\text{Int}_0)$ which is a
quasi-inverse when restricted to representable functors.
Here $\mathcal{G}^r$ is a fully faithful subcategory of $\mathcal{G}(\text{Def}_0)$ 
which contains the 
representable functors.
This $s$ is given in objects by 
$s(F,Z)(X/E)=\{f\in F(X)\mid f:Fr(X)\to Z\text{ factors through } Fr(X/E)\}$. 
In order to define $s(F,Z)$ on interpretable maps we restrict to 
$\mathcal{G}^r$ the full subcategory of pairs $(F,P)$ such that 
for every $g:D_1\to D_2$ a surjective map of $\emptyset$-definable sets,
if $f:Fr(D_2)\to P$ is such that $fFr(g)\in F(D_1)$, then 
$f\in F(D_2)$.

Then, if $X/E_1$ and $Y/E_2$ are objects of $\text{Int}_0$ and 
$f:Y/E_2\to X/E_1$ is an $\emptyset$-definable map, we consider
$R\subset Y\times X$ given by $R=\{(y,x)\mid p(x)=fp(y)\}$.
We have the coordinate projection maps $a:R\to Y$ and $b:R\to X$.
Note that $a$ is surjective.
The map $s(F,Z)(f):s(F,Z)(X/E_1)\to s(F,Z)(Y/E_2)$
is defined by $s(F,Z)(f)(r_1)=r_2$ satisfying 
$F(a)(r_2)=F(b)(r_1)$. 
We omit the strightforward but cumbersome verification that this is a 
well-defined function, that $s(F,Z)$ is a functor, that $s$ is a functor
and that $r$ and $s$ are quasi-inverses when restricted to representable
objects.
\end{proof}
Next we give a criterion for representability.
\begin{proposition}\label{representable}
With the notation of Proposition \ref{yoneda-2},
assume that $(F,P)$ is an object in $\mathcal{G}(\text{Def}_0)$, satisfying
the following conditions:
\begin{enumerate}
\item For every $p\in P$ there is $D$ in $\text{Def}_0$ an a 
$f\in F(D)$ such that $p\in \text{Im}(f)$.
\item For every $g:D_1\to D_2$ a surjective map of $\emptyset$-definable sets,
if $f:D_2\to P$ is such that $fg\in F(D_1)$, then 
$f\in F(D_2)$.
\item If $D=\sqcup_kD_k$ is a partition of the $\emptyset$-definable set $D$
into a finite number of
 $\emptyset$-definable subsets, and $f:D\to P$ is such that 
$f|_{D_i}\in F(D_i)$, then $f\in F(D)$.
\item For every $D$ in $\text{Def}_0$ and $f,g\in F(D)$, the set
$\{x\in D\mid f(x)=g(x)\}$ is a definable subset of $D$.
\end{enumerate}
Then $(F,P)$ is representable.
\end{proposition}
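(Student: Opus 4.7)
The plan is to represent $(F,P)$ by explicitly constructing an object $X\in\text{indfInt}_0$ whose Yoneda image is $(F,P)$, and then to invoke Proposition \ref{yoneda-2}. For each pair $(D,f)$ with $f\in F(D)$, I would first form the relation $E_f=\{(x,y)\in D\times D\mid f(x)=f(y)\}$. Applying condition (4) to the two elements $f\pi_1,f\pi_2\in F(D\times D)$ (where $\pi_1,\pi_2$ are the projections, and membership follows from the subfunctor structure), $E_f$ is a $\emptyset$-definable equivalence relation on $D$, so Proposition \ref{quotient} gives an interpretable quotient $Y_f:=D/E_f$ whose underlying set $\text{Fr}(Y_f)$ injects into $P$ via a canonical map $\iota_f$ with image $f(D)$.

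Next I would organize the $Y_f$ into a filtered diagram indexed by subset inclusion of their images in $P$. Condition (3) shows the diagram is filtered: given $(D_1,f_1)$ and $(D_2,f_2)$, the piecewise map $f_1\sqcup f_2$ lies in $F(D_1\sqcup D_2)$ and has image $f_1(D_1)\cup f_2(D_2)$. Whenever $f_1(D_1)\subset f_2(D_2)$, I would produce a transition morphism $Y_{f_1}\to Y_{f_2}$ from the pullback $R=\{(a,b)\in D_1\times D_2\mid f_1(a)=f_2(b)\}$, which is definable by (4), with surjective projection onto $D_1$ by the inclusion assumption. The composite $R\to D_2\to Y_{f_2}$ is set-theoretically constant on the fibers of the surjection $R\to D_1\to Y_{f_1}$ (since $f_2(b_i)=f_1(a_i)$), so Proposition \ref{quotient} yields the required definable map $Y_{f_1}\to Y_{f_2}$. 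Setting $X=\text{colim}_{(D,f)}Y_f$, condition (1) gives $\text{Fr}(X)=P$.

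Finally I would verify that $\text{Mor}(D,X)=F(D)$ as subfunctors of $\text{Mor}(\text{Fr}(D),P)$ for every $D\in\text{Def}_0$. Given $f\in F(D)$, the composite $D\to Y_f\to X$ is a $\emptyset$-definable morphism whose underlying set-map is $f$, giving $F(D)\subset\text{Mor}(D,X)$. For the reverse, the directed-colimit formula says that any $\varphi\in\text{Mor}(D,X)$ is represented by a definable map $h:D\to Y_{f'}$ for some pair $(D',f')$; pulling back the quotient surjection $D'\to Y_{f'}$ along $h$ yields a definable surjection $D\times_{Y_{f'}}D'\to D$ along which the underlying set-map of $\varphi$ pulls back to $f'\circ\pi_{D'}$, which lies in $F$. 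Condition (2) then propagates membership back to $D$, and naturality in $D$ follows from the faithfulness of $\text{Fr}$. The main technical obstacle will be ensuring that each pullback, factorization, and transition map is genuinely definable in $\text{fInt}_0$ rather than merely set-theoretic, with conditions (2)--(4) providing exactly the tools needed; once this is in hand, Proposition \ref{yoneda-2} concludes representability.
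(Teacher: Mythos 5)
Your overall strategy is the same as the paper's: form the definable equivalence relations $E_f$ via condition (4), take the interpretable quotients, assemble them into a directed diagram, and check via (1), (2) and (4) that the colimit has Yoneda image $(F,P)$. The quotient step, the definability of the pullback $R$, and the reverse inclusion via condition (2) applied to the surjection $D\times_{Y_{f'}}D'\to D$ all match the paper's argument.

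There is, however, one step that does not work as written: the filteredness argument. You apply condition (3) to the ``piecewise map $f_1\sqcup f_2$'' on $D_1\sqcup D_2$ to produce an upper bound for $(D_1,f_1)$ and $(D_2,f_2)$ in your index. But $F$ is a functor on $\text{Def}_0$ only, and condition (3) speaks of a partition of a set that is already $\emptyset$-definable; the formal disjoint union $D_1\sqcup D_2$ need not be an object of $\text{Def}_0$ (nor even of $\text{Int}_0$) in this paper's setting --- the category $\text{fInt}_0$ is introduced precisely because, absent two distinct $\emptyset$-definable constants, finite disjoint unions are not available among definable or interpretable sets. So $F(D_1\sqcup D_2)$ is not defined and your diagram indexed by images in $P$ need not be directed. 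The paper repairs exactly this point by indexing the diagram over \emph{finite subsets} $s$ of the category $\mathcal{I}$ of pairs $(D,h)$, forming $D_s=\bigsqcup_{(D,h)\in s}D$ formally in $\text{fInt}_0$, and defining the equivalence relation $E_s$ directly via condition (4) without ever asserting that the glued map lies in $F$. Once you make this repair, condition (3) is still needed, but in a different place: in the reverse inclusion, a morphism $D\to X_s$ lands in a finite union of pieces $X_k\subset X_s$, and you must restrict to the preimages of the $X_k$ and use (3) to reassemble membership in $F(D)$ from the pieces. With that rearrangement your argument goes through and coincides with the paper's.
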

\begin{proof}

For $D\in \text{Def}_0$ and $h\in F(D)$, define 
$E_h\subset D\times D$ as the set of pairs $(x,y)$ such that 
$h(x)=h(y)$. This is an equivalence relation on $D$ and it is definable
by item 4.

We consider 
$\mathcal{I}$ the category of pairs $(D,h)$ with $h\in F(D)$ and morphisms
$(D_1,h_1)\to (D_2,h_2)$ given by a map $g:D_1\to D_2$ such that
$h_2Fr(g)=h_1$.
Now consider the category $\mathcal{J}$ 
with objects the finite subsets of $\mathcal{I}$
and morphisms $s\to t$ given by a function of sets $l:s\to t$
and a collection of morphisms in $\mathcal{I}$, $(D,h)\to l(D,h)$.
For $s\in \mathcal{J}$ define $D_s=\bigsqcup_{(D,h)\in s}D$.
For $l:s\to t$ a morphism in $\mathcal{J}$, 
we also get a morphism $D_l:D_s\to D_t$,
in other words $D_{\bullet}$ is a functor $\mathcal{J}\to \text{fInt}_0$.

Consider the map $h_s:D_s\to P$ given by $h_s\tau_{(D,h)}=h$,
where $\tau_{(D,h)}$ is the canonical coprojection $D\to D_s$.
Note that $h_sD_g=h_t$ for every $g:s\to t$ morphism.
Consider $E_s\subset D_s\times D_s$ given by 
$(x,y)\in E_s$ if and only if $h_s(x)=h_s(y)$.
This is a definable subset of $D_s\times D_s$ by item 4.
Indeed that it is definable is equivalent to 
$\{(x,y)\in D_1\times D_2\mid h_1(x)=h_2(y)\}$ being definable in 
$D_1\times D_2$, for $D_i$ definable and $h_i\in F(D_i)$, which follows
from item 4.
This implies $D_s/E_s$ is an element of $\text{fInt}_0$ which we denote $X_s$,
see Proposition \ref{quotient}.

The map $h_s$ factors as an injection $\bar{h}_s:X_s\to P$.
If $g:s\to t$ is a morphism in $\mathcal{J}$, 
then $D_g:D_s\to D_t$ factors as
$X_g:X_s\to X_t$, and as $\bar{h}_t$ is injective this does not depend
on the choice of $g$.
So if we define a relation in $\mathcal{J}$ by 
$s\leq t$, if there is a morphism
$s\to t$, this becomes a preorder in the set of objects of $\mathcal{J}$.
Denote $J$ the set of objects of $\mathcal{J}$ considered
as the category associated to this preorder. Then we get a diagram.
$X:J\to \text{fInt}_0$.
Consider $J'$ the partial order associated to $J$, note that the natural
projection $J\to J'$ is an equivalence of categories, 
so if we choose a quasi-inverse we get a diagram $X:J'\to \text{fInt}_0$.
Note also that $J'$ is a directed set. 

So we have that $X\in \text{indfInt}_0$.
We claim that $X$ represents the data $(F,P)$.
Note that the maps $\bar{h}_s:X_s\to P$ glue to a map 
$h:X\to P$ which is surjective by item 1, and injective by construction.

Now we have to see that if $D$ is a definable set, then the composition by
$h$ gives a bijection $\text{Mor}(D,X)\to F(D)$.
In other words $f\in F(D)$ if and only if $hf'=f$ for some ind-definable map
$f':D\to X$.
Take first $f\in F(D)$. Then $(D,f)\in \mathcal{I}$ 
and so we may take $s\in J'$ with 
$X_s=D/E_f$. We take $f'$ given by the projection 
$D\to X_s$ followed by the canonical map $X_s\to X$.
Then it follows from the definitions that $hf'=f$.
Now take $f$ such that $f=hf_1$ for some ind-definable 
morphism $f_1:D\to X$.
We have to show $f\in F(D)$.
By the definitions this means that $f_1$ factors as $D\to X_s\to X$
for some $s\in J'$ and $X_s\to X$ the canonical map.
If we denote $f_2:D\to X_s$ we conclude that $f=\bar{h}_sf_2$.
We have that $s=\{(D_k,h_k)\}_k$ and we denote $X_k$ 
the image of $X_{(D_k,h_k)}$ in $X_s$. Then $X_s=X_1\cup\cdots\cup X_n$.
Restricting to $(f_2)^{-1}(X_k)$ and using item 3 we may assume 
$X_s=D_1/E_{h_1}$ for some $(D_1,h_1)\in I$.
Denote $R=\{(x,y)\in D\times D_1\mid f_2(x)=p(y)\}$, and 
$g_1:R\to D, g_2:R\to D_1$ the projections. Composing $f$ with $g_1$ and using
Item 2 we see that we may assume $f_2:D\to X_s$ lifts to a definable map
$f_3:D\to D_1$. Now denote $h_2=h_1f_3$, which is an element $F(D)$, as 
$F$ is a functor.
Then from the definitions 
$f=\bar{h}_sf_2=\bar{h}_spf_3=h_1f_3=h_2$ as required.
\end{proof}
One sees in the proof that a representing object of $(F,P)$ is given by 
a colimit $\text{colim}_iX_i$ where $X_i\in \text{fInt}_0$ and the intermediate maps
$X_i\to X_j$ are injective. Such an object is called 
strict $\emptyset$-ind-f-interpretable.
A representable pair $(F,P)$ always satisfies items 1,2 and 3, and it satisfies
item 4 if and only if it is representable by a strict 
$\emptyset$-ind-f-interpretable set.
\begin{example}\label{completion-ind-definable}
We apply this to the case we considered in the start of the section.
Take $\bar{S}$ the definable completion of a linear order $S$ definable
in some model. If $D$ is $\emptyset$-definable then 
$F(D)$ is defined as the set of functions $f:D\to \bar{S}$ such that 
$\{f(x)\}_{x\in D}$ is a $\emptyset$-definable family of subsets of $S$.
Then we see that the pair $(F,\bar{S})$ is representable by an element
of $\text{indfInt}_0$, by the Proposition \ref{representable} (all conditions
in the hypothesis are easy to check).

Now we can see that the inclusion map $S\to \bar{S}$ is ind-definable.
This follows as the family $\{\{x\in S\mid x\leq a\}\}_{a\in S}=\{i(a)\}_{a\in S}$
is an $\emptyset$-definable family.

\end{example}
\begin{example}\label{completion-function-definable}
Suppose $C\subset S$ is an $\emptyset$-definable subset.
Here we verify that $C_{-}:\bar{S}\to \bar{S}$ defined by the formula
$C_{-}(a)=\text{sup}\{x\in S\mid x<a, x\in C\}$ is a well defined ind-definable 
map.

Note that if $a\in \bar{S}$ then $\{x\in S\mid x<a,x\in C\}$ is definable
in $S$ so it has a supremum in $\bar{S}$.
To see that it is ind-definable one can use Proposition \ref{yoneda-2} 
for instance.
So one needs to verify that for $T$ definable and $T\to \bar{S}$
 an ind-definable map,
then $C_{-}T$ is ind-definable.
The ind-definable maps $T\to \bar{S}$ are in natural bijective correspondence
with the definable families $\{x_t\}_{t\in T}$ of cuts in $S$.
So now has to check that $\{C_{-}(x_t)\}_{t\in T}$ is also a definable family
of cuts in $S$. This is straigthforward, for example,
from Lemma \ref{completion}
$C_{-}(x_t)=\{s\in S\mid s\leq C_{-}(x_t)\}$, and 
$s\leq C_{-}(x_t)$ if and only if for every $s'<s$, $s'\in S$, we have
$s'<C_{-}(x_t)$, and this holds if and only 
there is a $s''\in S$ such that $s'<s''\leq x_t$ and $s''\in C$.
As the condition $s''\leq x_t$ can be expresed by a formula in $(s'',t)$ we
see that the condition $s\leq C_{-}(x_t)$ can be expressed by
a formula in $(s,t)$.

One can check in a similar way that $C_{+}:\bar{S}\to \bar{S}$
given by $C_{+}(a)=\text{Inf}\{x\in S\mid a<x, x\in C\}$ 
makes sense and is ind-definable.
\end{example}
\begin{definition}
If $X$ is an ind-definable set, then $Y\subset X$ is called 
$\emptyset$-relatively definable.
if for every $\emptyset$-ind-definable map $f:D\to X$, where $D$
is $\emptyset$-definable, the inverse image $f^{-1}(Y)\subset D$
is $\emptyset$-definable.

An element $x\in X$ is $\emptyset$-definable if $\{x\}$ is 
$\emptyset$-relatively definable
\end{definition}
Note that this definition can be read directly from the pair $(F,P)$ in
Proposition \ref{representable}. 
Indeed, $Q\subset P$ is $\emptyset$-relatively
definable if and only if for every $f\in F(D)$ one has $f^{-1}(Q)\subset D$
is $\emptyset$-definable.

Note also that boolean combinations of $\emptyset$-relatively 
definable subsets are
$\emptyset$-relatively definable.
\begin{example}
If $X$ is an $\emptyset$-ind-definable set, then $X$ is strict ind-definable
if and only if the diagonal in $X\times X$ is relatively definable.
This is a restatement of condition 4 in Proposition \ref{representable}.
\end{example}
\begin{example}\label{completion-order-definable}
In $\bar{S}$ the relation $\leq$ in $\bar{S}^2$ is 
$\emptyset$-relatively definable.
This boils down to verifying that if $\{X_a\}_{a\in T}$ and 
$\{Y_a\}_{a\in T}$ are $\emptyset$-definable families of cuts, then 
$\{a\in T\mid Y_a\leq X_a\}$ is a definable subset of $T$.

Similarly we have that the minimum and maximum of $\bar{S}$ are definable
elements of $\bar{S}$.
\end{example}
\bibliographystyle{plain}
\bibliography{harvard}
\end{document}